\def\Z{\mathbb{Z}}
   \def\id{{\rm id}}
\newtheorem{theo}{Theorem}[section]
\newtheorem{Defi}[theo]{{\bf Definition}}
\newenvironment{defi}{\begin{Defi} \normalfont}{\end{Defi}}
\newtheorem{Prop}[theo]{{\bf Proposition}}
\newenvironment{prop}{\begin{Prop} \normalfont}{\end{Prop}}
\newtheorem{Cor}[theo]{{\bf Corollary}}
\newenvironment{cor}{\begin{Cor} \normalfont}{\end{Cor}}
\newtheorem{Lem}[theo]{{\bf Lemma}}
\newenvironment{lem}{\begin{Lem} \normalfont}{\end{Lem}}
\newtheorem{Exa}[theo]{{\bf Example}}
\newenvironment{exa}{\begin{Exa} \normalfont}{\end{Exa}}  
\newtheorem{Rem}[theo]{{\bf Remark}}
\newenvironment{rem}{\begin{Rem} \normalfont}{\end{Rem}}
\begin{document}
\title{Crossed modules of racks} 

\author{Alissa S. Crans\\
         Mathematics Department\\
         Loyola Marymount University\\
         One LMU Drive, Suite 2700\\
         Los Angeles, CA 90045 \\
         email: acrans@lmu.edu\\
\and    Friedrich Wagemann\\
        Laboratoire de Math\'ematiques Jean Leray\\
        Universit\'e de Nantes\\
        2, rue de la Houssini\`ere\\
        44322 Nantes, cedex 3\\
        email: wagemann@math.univ-nantes.fr}

\maketitle

\begin{abstract}
 We generalize the notion of a crossed module of groups to that of a crossed module of 
racks. We investigate the relation to categorified racks, namely strict 2-racks, and 
trunk-like objects in the 
category of racks, generalizing the relation between crossed modules of groups and strict
$2$-groups. Then we explore topological applications. We show that by applying the rack-space
functor, a crossed module of racks gives rise to a covering. Our main result shows how 
the fundamental racks associated to links upstairs and downstairs in a 
covering fit together to form a crossed module of racks.   
\end{abstract}





\section{Introduction}

Racks are generalizations of groups whose axioms capture essential properties of 
group conjugation and algebraically encode two of the Reidemeister moves.  
Because of the latter, they have proven useful in defining link and knot invariants. 
We remind the reader that a {\it rack} is a set $X$ equipped with a binary 
operation $(x,y)\mapsto x\lhd y$ that is invertible and 
self-distributive, i.e. 
\[(x\lhd y)\lhd z\,=\,(x\lhd z)\lhd(y\lhd z)\]
for all $x,y,z\in X$.
The most important example of a rack structure comes from the conjugation in a group, i.e.
defining $g\lhd h:=h^{-1}gh$ for all $g,h\in G$ for a group $G$ satisfies both rack axioms.  

A {\it crossed module of groups} is a quadruple $(M,N, \mu, \cdot)$ where $M$ and $N$ are groups, 
$\mu:M\to N$ is a group homomorphism, and $\cdot$ is an action 
of $N$ on $M$ by automorphisms such that $\mu$ is {\it equivariant} (considering on $N$ 
the adjoint action)
and $\mu$ satisfies the {\it Peiffer identity}, that is, the operation $m\cdot \mu(m')$ 
is just the conjugation $(m')^{-1}mm'$  in $M$ for all $m,m'\in M$.

Crossed modules of groups were introduced by J.H.C. Whitehead \cite{W1, W2} in 1949.
Whitehead remarked that for a pointed pair of spaces $(X,A)$, the connecting homomorphism
in the long exact sequence of the pair
\[  \partial : \pi_2(X,A)\to \pi_1(A)  \]
is a crossed module of groups using the natural action of $\pi_1(A)$ on all
higher homotopy groups. Whitehead showed in particular that in the special case 
where $X$ is obtained from $A$ by attaching $2$-cells, the crossed module is free.  
Crossed modules were also used to 
express $3$-cohomology 
classes by (equivalence classes of) algebraic objects, see \cite{MacWhi}, \cite{LodKas}. 
Crossed modules have been generalized to higher dimensional crossed complexes
in order to capture higher degree homotopy, see for example Brown's work on the higher 
van Kampen Theorem \cite{Bro1}, \cite{Bro2}. 
More recently, crossed 
modules of groups have attracted renewed interest in the search for categorifications of 
algebraic notions, as they are equivalent to strict $2$-groups, i.e. category objects in
the category of groups, see \cite{Lod} or \cite{Mac}.   

In one of the foundational articles about racks Fenn and Rourke introduce
the notion of the fundamental rack of a link \cite{FenRou}, which is at the heart of 
their approach to link invariants. Given a codimension two embedding $L:M\subset Q$ in 
a connected manifold $Q$ which is framed and transversally oriented, the {\it fundamental
rack} of the link $L$ is a set of homotopy classes of paths in 
$Q_0:=\overline{Q\setminus N(M)}$, where 
$N(M)$ is a (closed) tubular neighborhood of the submanifold $M\subset Q$.   
In fact, the fundamental rack is an example of an 
{\it augmented rack}, which we will encounter later in this paper. Its associated 
crossed module of groups is Whitehead's crossed module of groups
\[\partial : \pi_2(Q,Q_0)\to \pi_1(Q_0).\]     

In this paper, we generalize the notion of a crossed module from groups
to racks. Motivation for this study comes from the relation of crossed module of groups to strict
$2$-groups on the algebraic side and from the notion of the fundamental rack of a link on the 
topological side.  

On the algebraic side, we first develop three classes of examples of crossed modules of racks, which are
(in growing generality): crossed modules of groups, augmented racks and generalized augmented racks. 
This last structure consists simply of a rack $R$, a rack module $X$ and an equivariant map 
$p:X\to R$. We begin in Section \ref{first} by reminding the reader of relevant rack 
definitions and then we introduce the notion of a crossed module of racks.  
We show that a generalized augmented rack
is equivalent to a crossed module of racks. In other words, we show that the rack 
structure on $X$ is 
encoded in the module structure. We provide examples of crossed modules of racks and 
demonstrate the relationship between the examples.

We continue in Section \ref{second}, by investigating the relation of crossed modules 
of racks to {\it strict 2-racks}, or {\it categorical
racks}, i.e. to category objects in the category of racks. The relation is, 
disappointingly, not as strong
as in the group setting. The main reason for this is that we lack the ability 
to construct a category from
a rack, a module and an equivariant map - a construction which is well-known 
in the case of groups. 

Our remedy is to consider trunk-like objects in the category of racks, which 
we explore
in Section \ref{third}. The correspondence between these objects and crossed 
modules of racks is much better behaved, as demonstrated in Proposition 
\ref{correspondence_cr_mod_racks_trunks}. 

Finally, we conclude in Section \ref{apps}, by exploring applications of 
these rack structures to knot theory.  We note that Kauffman and Martins 
\cite{KM, Martins, Martins1} have obtained knot invariants from strict 
$2$-groups considered as crossed modules.  Given the relationship between 
strict $2$-groups and strict $2$-racks, we expect that Kauffman and 
Martins' methods relate to the material presented here.   Moreover, 
it turns out that the notion of a crossed module of racks
is related to coverings in the setting of link invariants of \cite{FenRou}. 
On one hand, passing
to the associated rack spaces transforms a crossed module of racks 
into a covering, as shown in  
Proposition \ref{cr_mod_racks_to_covering}.  On the other hand,
starting from links in a covering, we can construct a crossed module 
of the corresponding fundamental 
racks as proven in Theorem \ref{covering_to_cr_mod_racks}.\\  

\noindent{\bf Acknowledgements:}\quad
FW heartily thanks Alan Weinstein for the invitation to UC Berkeley during Spring 2013. 
FW furthermore thanks Simon Covez for discussion about the 
associated group of a rack.    

\section{Crossed modules of racks} \label{first}

\subsection{Basic definitions}

We begin by recalling the notion of a rack, which, as mentioned, results from 
axiomatizing the notion of group conjugation. 

\begin{defi}
A {\bf right rack} consists of a set $X$ equipped with a binary operation denoted $(x,y)\mapsto x\lhd y$
such that for all $x, y,$ and $z\in X$, the map $x\mapsto x\lhd y$ is bijective and
\[(x\lhd y)\lhd z\,=\,(x\lhd z)\lhd(y\lhd z).\]
\end{defi}  

There is also the notion of a left rack where the operation is written $(x,y)\mapsto x\rhd y$.
The left rack operation, then, satisfies
\[x \rhd (y\rhd z)\,=\,(x\rhd y)\rhd(x\rhd z)\]
for all $x,y,$ and $z\in X$.
One can always transform a left rack into a right rack (and vice-versa) by sending the 
bijective map $y\mapsto x\rhd y$ to its inverse (which is then denoted $z\mapsto z\lhd x$). 
In this paper, we will work with right racks as in \cite{FenRou}, \cite{FRS}.  

The conjugation in a group $G$ gives rise to a (left)
rack operation given by $(g,h)\mapsto ghg^{-1}$ and a right rack operation by
$(g,h)\mapsto g\lhd h:=h^{-1}gh$. For a point of view on racks where the two operations are treated 
on an equal basis, see e.g. \cite{MarPic} p.17. 

The notion of a unit leads to pointed racks.

\begin{defi}
A {\bf pointed rack} $(X,\lhd,1)$ consists of a set $X$ equipped with a binary 
operation $\lhd$ and an element $1\in X$ 
satisfying:
\begin{enumerate}
\item $(x\lhd y)\lhd z\,=\,(x\lhd z)\lhd(y\lhd z)$ for all $x,y,z\in X$,
\item For each $a,b\in X$, there exists a unique $x\in X$ such that $x\lhd a\,=\,b$,
\item $1\lhd x\,=\,1$ and $x\lhd 1\,=\,x$ for all $x\in X$.
\end{enumerate}
\end{defi} 

Once again, the conjugation rack of a group is an example of a pointed rack. 
For formal reasons, we will denote the conjugation rack underlying the group 
$G$ by ${\rm Conj}(G)$. Denote by ${\tt Racks}$ the category of racks, i.e. 
the category whose objects 
are racks and whose morphisms are rack homomorphisms as defined below. Then, 
${\rm Conj}$ is a functor from the category of groups ${\tt Grp}$ to ${\tt Racks}$.

\begin{defi}
Let $R$ and $S$ be two racks. A {\bf morphism of racks} is a map $\mu:R\to S$ such that 
\[\mu(r\lhd r')\,=\,\mu(r)\lhd \mu(r')\]
for all $r\in R$.
In the usual way, we will speak about iso- and automorphisms of racks. 
\end{defi}

\begin{defi}
Let $R$ be a rack. The {\bf associated group to $R$}, denoted ${\rm As}(R)$, is the quotient 
of the free group $F(R)$ on the set
$R$ by the normal subgroup generated by the elements $y^{-1}x^{-1}y(x\lhd y)$ for all $x,y\in R$.
Denote the canonical morphism of racks by $i:R\to{\rm As}(R)$. 
\end{defi}

\begin{exa}
Consider the set $R:=\{x,y\}$ consisting of two elements $x$ and $y$ with an operation given by
\[ x\rhd x\,=\,y,\,\,\,\,x\rhd y\,=\,x,\,\,\,\,y\rhd y\,=\,x,\,\,\,\,y\rhd x\,=\,y. \]
It is easy to verify that $R$ is indeed a rack. The relation $x\rhd y = xyx^{-1}$ in ${\rm As}(R)$
implies that $x=y$ in ${\rm As}(R)$. It turns out that ${\rm As}(R)$ is isomorphic to $\Z$, thus
the canonical map $i:R\to{\rm As}(R)$ is not necessarily injective.
\end{exa}

The importance of the associated group ${\rm As}(R)$ of a rack $R$ comes from the following 
universality property:

\begin{lem}
Let $R$ be a rack and $G$ be a group. For any morphism of racks $f:R\to {\rm Conj}(G)$, 
there exists a unique group morphism $g:{\rm As}(R)\to G$ such that $g\circ i\,=\,f$.
\end{lem}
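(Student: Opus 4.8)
The plan is to construct $g$ explicitly via the universal property of the free group and then check it descends to the quotient defining ${\rm As}(R)$. First I would recall that $F(R)$ is the free group on the underlying set of $R$, so the set-map $f : R \to G$ (forgetting that it is a rack morphism, just using the underlying map of sets) extends uniquely to a group homomorphism $\tilde{g} : F(R) \to G$. This is the only place the freeness of $F(R)$ is used, and it immediately forces uniqueness of $g$ at the level of $F(R)$, hence also of any induced map on the quotient ${\rm As}(R)$, since $i$ is surjective onto ${\rm As}(R)$ (the generators of ${\rm As}(R)$ are the images of the elements of $R$).

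Next I would verify that $\tilde{g}$ kills the normal subgroup $K \trianglelefteq F(R)$ generated by the elements $y^{-1}x^{-1}y(x\lhd y)$ for $x,y \in R$. Since $K$ is the normal closure of these elements and $\tilde{g}$ is a homomorphism, it suffices to check that $\tilde{g}$ sends each generator to $1 \in G$. Compute:
\[ \tilde{g}\bigl(y^{-1}x^{-1}y(x\lhd y)\bigr) \,=\, f(y)^{-1} f(x)^{-1} f(y)\, f(x\lhd y). \]
Because $f : R \to {\rm Conj}(G)$ is a morphism of racks, $f(x\lhd y) = f(x) \lhd f(y) = f(y)^{-1} f(x) f(y)$ in ${\rm Conj}(G)$, and substituting this in gives $f(y)^{-1} f(x)^{-1} f(y) \cdot f(y)^{-1} f(x) f(y) = 1$. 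Hence $K \subseteq \ker \tilde{g}$, so $\tilde{g}$ factors through the quotient map $F(R) \to {\rm As}(R)$, yielding a group homomorphism $g : {\rm As}(R) \to G$.

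Finally I would check $g \circ i = f$ as maps of racks (equivalently, of sets, since both sides are already known to be rack morphisms into ${\rm Conj}(G)$). By construction $i$ is the composite $R \hookrightarrow F(R) \twoheadrightarrow {\rm As}(R)$, and $g$ was obtained from $\tilde{g}$ by passing to the quotient, so $g \circ i$ agrees with the restriction of $\tilde{g}$ to $R$, which is $f$ by definition of $\tilde{g}$. For uniqueness: if $g'$ also satisfies $g' \circ i = f$, then $g'$ and $g$ agree on $i(R)$, which generates ${\rm As}(R)$ as a group, so $g' = g$.

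There is no real obstacle here; the only point requiring a moment's care is the bookkeeping between the rack operation on ${\rm Conj}(G)$ and group conjugation — making sure the sign/order conventions ($g \lhd h = h^{-1} g h$) line up with the chosen form $y^{-1}x^{-1}y(x\lhd y)$ of the defining relators so that the cancellation in the displayed computation is exact. Everything else is a direct application of the universal property of free groups together with the fact that a homomorphism vanishes on a normal closure as soon as it vanishes on the generating set.
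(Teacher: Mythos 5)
Your proof is correct and is the standard argument; the paper actually states this lemma without proof, so there is nothing to compare it against, but extending $f$ along the set inclusion $R\hookrightarrow F(R)$ and checking that each relator $y^{-1}x^{-1}y(x\lhd y)$ maps to $1$ because $f(x\lhd y)=f(x)\lhd f(y)=f(y)^{-1}f(x)f(y)$ is exactly the intended derivation, and your sign conventions do match the paper's ($g\lhd h=h^{-1}gh$). One small wording slip: $i$ is not surjective onto ${\rm As}(R)$ (for a trivial rack, ${\rm As}(R)$ is the free abelian group on $R$); what you need, and what you correctly invoke in the final uniqueness step, is that $i(R)$ generates ${\rm As}(R)$ as a group.
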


From this, one can deduce that the functor ${\rm As}:{\tt Racks}\to{\tt Grp}$ from the category of
racks to the category of groups is left adjoint to the functor ${\rm Conj}:{\tt Grp}\to{\tt Racks}$ 
which associates to a group its underlying conjugation rack.

\begin{rem}
In fact, the unit of the adjunction is just the map $i$. 
By standard arguments, the unit of the adjunction is injective, but only as a map
\[ i : {\rm Conj}(G)\to  {\rm Conj}({\rm As}({\rm Conj}(G)))\]
for a group $G$. 
\end{rem}

We observe that the compositions ${\rm Conj}({\rm As}(R))$ for a racks $R$ and ${\rm As}({\rm Conj}(G))$
for a group $G$ are, in general, far from being equal to $R$ or $G$ respectively.   
For example, for an abelian group $A$, the 
conjugation rack ${\rm Conj}(A)$ is the set $A$ with the trivial rack product, while 
${\rm As}({\rm Conj}(A))$ is the free abelian group on the set $A$. 
   
\begin{defi}  \label{definition_rack_action}
Let $R$ be a rack and $X$ be a set. We say that $R$ {\bf acts on} $X$ (or that $X$ is an $R$-set) when
there are bijections $(\cdot r):X\to X$ for all $r\in R$ such that 
\[(x\cdot r)\cdot r'\,=\,(x\cdot r')\cdot (r\lhd r')\] 
for all $x\in X$ and all $r,r'\in R$.
\end{defi}

\begin{lem}    \label{rack_action}
An action of $R$ on $X$ is equivalent to a morphism of racks $\mu:R\to{\rm Bij}(X)$ with values in the
conjugation rack underlying the group of bijections on $X.$
\end{lem}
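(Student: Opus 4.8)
The plan is to set up two explicit, mutually inverse translations between the two structures. Given an action of $R$ on $X$ in the sense of Definition \ref{definition_rack_action}, for each $r\in R$ write $\phi_r\in{\rm Bij}(X)$ for the bijection $x\mapsto x\cdot r$. Reading the action axiom $(x\cdot r)\cdot r'=(x\cdot r')\cdot(r\lhd r')$ as an equality of maps $X\to X$ gives $\phi_{r'}\circ\phi_r=\phi_{r\lhd r'}\circ\phi_{r'}$, and since $\phi_{r'}$ is invertible this is equivalent to $\phi_{r\lhd r'}=\phi_{r'}\circ\phi_r\circ\phi_{r'}^{-1}$. I would then define $\mu(r):=\phi_r^{-1}$; this choice, rather than $\mu(r):=\phi_r$, is forced by our use of right racks, since in ${\rm Conj}({\rm Bij}(X))$ the operation is $f\lhd g=g^{-1}\circ f\circ g$. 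A one-line computation ($\mu(r')^{-1}\circ\mu(r)\circ\mu(r')=\phi_{r'}\circ\phi_r^{-1}\circ\phi_{r'}^{-1}=\phi_{r\lhd r'}^{-1}$) then shows $\mu(r\lhd r')=\mu(r)\lhd\mu(r')$, so $\mu\colon R\to{\rm Conj}({\rm Bij}(X))$ is a morphism of racks.

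Conversely, given a morphism of racks $\mu\colon R\to{\rm Conj}({\rm Bij}(X))$, so that $\mu(r\lhd r')=\mu(r')^{-1}\circ\mu(r)\circ\mu(r')$, I would define $x\cdot r:=\mu(r)^{-1}(x)$. Each map $(\cdot r)$ is a bijection of $X$ because $\mu(r)\in{\rm Bij}(X)$, and substituting this definition into $(x\cdot r)\cdot r'$ and $(x\cdot r')\cdot(r\lhd r')$ reduces the action axiom to precisely the displayed conjugation identity for $\mu$ — the same computation as above, run backwards. This produces an action of $R$ on $X$.

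Finally I would observe that these two assignments are mutually inverse: $\phi_r$ is by construction the map $(\cdot r)$, and composing one construction with the other amounts to inverting a bijection twice, which is the identity. Hence the correspondence is a bijection, which is the claimed equivalence. There is no genuine difficulty here; the only point demanding attention — and the main place where one must be careful rather than clever — is consistent bookkeeping of the composition order in ${\rm Bij}(X)$ and the consequent placement of the inverse in $\mu(r)=\phi_r^{-1}$, which is exactly what makes the right-rack conjugation operation $f\lhd g=g^{-1}\circ f\circ g$ line up with the right-rack action axiom.
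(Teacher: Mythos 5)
Your proof is correct. The paper states Lemma \ref{rack_action} without proof, and your argument is exactly the routine unwinding of definitions being left to the reader; the one point of substance is the composition convention in ${\rm Bij}(X)$ --- with $fg=f\circ g$ and $f\lhd g=g^{-1}\circ f\circ g$ one must indeed take $\mu(r)=\phi_r^{-1}$ (whereas $\mu(r)=\phi_r$ would work under the opposite composition order) --- and you identify and handle this correctly.
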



\begin{defi} \label{definition_hs_product}
Let $R$ be a rack and $X$ be an $R$-set. The {\bf hemi-semi-direct product rack} 
consists of the set $X\times R$ equipped with the rack product
\[(x,r)\lhd(x',r')\,:=\,(x\cdot r',r\lhd r')\]
for all $x,x'\in X$ and all $r,r'\in R.$
\end{defi}

\begin{defi}
Let $R$ and $S$ be racks. We say that $S$ {\bf acts on $R$ by automorphisms} when there is an action
of $S$ on $R$ and 
\[(r\lhd r')\cdot s\,=\,(r\cdot s)\lhd(r'\cdot s)\]
for all $s\in S$ and all $r,r'\in R.$
\end{defi}

\begin{defi}
Let $G$ be a group and $X$ be a $G$-set. We say that $X$ together with a map $p:X\to G$ 
is an {\bf augmented rack} when
it satisfies the augmentation identity, i.e. 
\[p(x\cdot g)\,=\,g^{-1}\,p(x)\, g\]
for all $g\in G$ and all $x\in X.$
\end{defi}

We observe that for any augmented rack $p:X\to G$, one may define a rack operation 
on $X$ as $x\lhd x'\,:=\,x\cdot p(x')$ for all $x,x'\in X$.  Then, the map $p$ 
becomes an equivariant morphism of racks (with respect to the given $G$-action on $X$ 
and the conjugation action on 
the group $G$). Augmented racks are in fact the Yetter-Drinfel'd modules over 
the Hopf algebra $G$ (in the symmetric monoidal category of sets), or in other words,
the Drinfel'd center of the symmetric
monoidal category of $G$-modules, see \cite{FreYet}.   

\begin{exa}
There are many examples of augmented racks. For example, for each rack $R$, the canonical morphism 
$R\to{\rm Aut}(R)$ and the morphism $i:R\to{\rm As}(R)$ are augmented racks.
\end{exa}

We now introduce the notion of a crossed module of racks:

\begin{defi}   \label{definition_cr_mod}
A {\bf crossed module of racks} is a morphism of racks $\mu:R\to S$ together with an action of $S$ on
$R$ by automorphisms such that:
\begin{enumerate}
\item $\mu$ is equivariant, i.e. $\mu(r\cdot s)\,=\,\mu(r)\lhd s$ for all $s\in S$, $r\in R$ and
\item Peiffer's identity is satisfied, i.e. $r\cdot \mu(r')\,=\,r\lhd r'$ for all $r,r'\in R$. 
\end{enumerate}
\end{defi}

\subsection{Examples of crossed modules of racks}

We now provide some important classes of examples.


\begin{exa}  \label{embedding}
Let $\mu:M\to N$ be a crossed module of groups. Passing to the associated conjugation racks of $M$ and
$N$, we obtain a crossed module of racks. Indeed, the group morphism $N\to{\rm Bij}(M)$ gives rise to
a morphism of racks, and thus we have a rack action of the conjugation rack $N$ on the conjugation rack 
$M$. Moreover we have:
\[(r\lhd r')\cdot n\,=\,((r')^{-1}rr')\cdot n\,=\,(r'\cdot n)^{-1}(r\cdot n)(r'\cdot n)\,=\,
(r\cdot n)\lhd(r'\cdot n),\]
for all $n\in N$ and all $m,m'\in M$, 
where we have used the fact that the group $N$ acts on $M$ by automorphisms. Finally, the 
equivariance 
condition for $\mu$ and the Peiffer identity follow from the analogous conditions for the 
crossed module of groups. \end{exa}

\begin{rem}
Let us recall here a mechanism to construct {\it explicit} examples of 
crossed modules of groups, and thus, {\it a fortiori}, of racks. 
In fact, one can construct crossed modules in an explicit way from cohomology classes 
$[\theta]\in H^3(G,V)$ for some group
$G$ and a $G$-module $V$, cf \cite{Wag}. 
Indeed, choose an injective presentation of $V$, i.e. a short exact sequence
\begin{equation}    \label{short_exact}
0\to V\to I\to Q\to 0,
\end{equation}
where $I$ is an injective $G$-module. 
The long exact sequence in cohomology contains the connecting map
\[\partial:H^2(G,Q)\to H^3(G,V),\]
which by injectivity of $I$ is an isomorphism. 
There exists thus a unique class $[\alpha]\in H^2(G,Q)$ with 
$\partial[\alpha]=\theta$. To $[\alpha]$, one may associate an abelian extension
\[0\to Q\to Q\times_{\alpha}G \to G\to 0,\]
and this extension can be spliced together with the short exact sequence 
(\ref{short_exact}) to give a crossed module
\[0\to V\to I \to Q\times_{\alpha}G \to G\to 0.\]
Under the isomorphism between equivalence classes of crossed modules of  
groups with kernel $V$ and cokernel $G$ and
$H^3(G,V)$, this crossed module corresponds to $[\theta]$. 
In many cases, this crossed module is explicitely constructible,
see \cite{Wag}. Observe that for this construction, it suffices 
to have $\partial$ surjective, and this only in degree two, 
so one does not need an injective presentation.
\end{rem}    
   
\begin{exa}
An augmented rack $p:X\to G$ is an example of a crossed module of racks. Indeed, we have already 
remarked that $X$ may be equipped with a rack operation making $p$ an equivariant morphism of racks. 
Then, $G$ acts on the rack $X$, because the group morphism $G\to{\rm Bij}(X)$ is also a morphism
of conjugation racks (cf Lemma \ref{rack_action}). Moreover, $G$ acts by automorphisms, because
\begin{eqnarray*}
(x\lhd x')\cdot g&=&(x\cdot p(x'))\cdot g\\
&=& x\cdot (g g^{-1} p(x') g) \\
&=& (x\cdot g)\cdot p(x'\cdot g) \\
&=& (x\cdot g)\lhd (x'\cdot g)
\end{eqnarray*}
for all $g\in G$ and all $x,x'\in X.$
We have already mentioned that the equivariance of $p$ comes from the augmentation identity.
The Peiffer identity comes from the definition of the rack operation on $X$. 
\end{exa} 

\begin{exa}  \label{gen_augm_rack}
There is also a {\it generalized augmented rack}, i.e. an augmented rack of racks instead of 
groups. For this, let $R$ be a rack and $X$ be an $R$-module (in the sense of Definition   
\ref{definition_rack_action}). Suppose there is a map $p:X\to R$ which satisfies the \emph{generalized
augmentation identity}, i.e. 
\[p(x\cdot r)\,=\,p(x)\lhd r\]
for all $r\in R$ and all $x\in X$.
Then this generalized augmented rack defines a crossed module of racks. Namely, $X$ becomes a rack
with the product 
\[x\lhd y\,:=\,x\cdot p(y),\]
for all $x,y\in X$. For this, note first that $p$ is a morphism of racks:
\[p(x\lhd y)\,=\,p(x\cdot p(y))\,=\,p(x)\lhd p(y).\]
We verify the rack identity using the fact that $\cdot$ is a rack action and the generalized augmentation 
identity:
\begin{eqnarray*}
(x\lhd y)\lhd z&=&(x\cdot p(y))\cdot p(z) \\
&=& (x\cdot p(z))\cdot (p(y)\lhd p(z)) \\
&=& (x\lhd z)\cdot p(y\lhd z)  \\
&=& (x\lhd z)\lhd (y\lhd z).
\end{eqnarray*}
Furthermore, $R$ acts by automorphisms on the rack $X$:
\begin{eqnarray*}
(x\lhd y)\cdot r &=& (x\cdot p(y))\cdot r \\
&=& (x\cdot r)\cdot(p(y)\lhd r)) \\
&=& (x\cdot r)\cdot p(y\cdot r) \\
&=& (x\cdot r)\lhd (y\cdot r). 
\end{eqnarray*} 
It is clear that $p$ is equivariant by the generalized augmentation identity, and that the Peiffer 
identity follows from the definition of the rack product on $X$. Thus in conclusion, 
$p:X\to R$ is a crossed module of racks. 
\end{exa} 

These three classes of examples are ordered here with growing generality, i.e. crossed modules of 
groups are particular augmented racks, and augmented racks are particular generalized augmented racks.

\subsection{Relations between these classes of examples}   \label{relation_examples}

It turns out that the last class of examples from the previous section 
is equivalent to crossed modules of racks:

\begin{prop}  \label{cr_mod_vs_gen_augm_rack}
There is a one-to-one correspondence between crossed modules of racks and generalized augmented
racks. 
\end{prop}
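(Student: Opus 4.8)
The plan is to establish a bijection at the level of data and then check that the axioms match up on both sides. Given a generalized augmented rack, that is, a rack $R$, an $R$-module $X$ and an equivariant map $p\colon X\to R$ satisfying $p(x\cdot r) = p(x)\lhd r$, Example \ref{gen_augm_rack} already produces a crossed module of racks $\mu := p\colon X\to R$, where $X$ carries the rack product $x\lhd y := x\cdot p(y)$ and $R$ acts on $X$ by the given module structure. So one direction of the correspondence is done; what remains is the converse and the verification that the two assignments are mutually inverse.

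Next I would go from a crossed module of racks $\mu\colon R\to S$, with $S$ acting on $R$ by automorphisms, to a generalized augmented rack. The obvious candidate is to take the rack $S$, the $S$-set $R$ (with the given action, which is a module structure in the sense of Definition \ref{definition_rack_action} precisely because it is a rack action), and the map $\mu$ itself as the augmentation. The generalized augmentation identity $\mu(r\cdot s) = \mu(r)\lhd s$ is exactly the equivariance axiom (1) in Definition \ref{definition_cr_mod}. Thus every crossed module of racks $\mu\colon R\to S$ yields a generalized augmented rack $(S, R, \mu)$.

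The crux is then to check that these constructions are inverse to each other, and the only subtle point is the rack structure on the source. Starting from a crossed module $\mu\colon R\to S$ and forming the associated generalized augmented rack $(S,R,\mu)$, the recipe of Example \ref{gen_augm_rack} re-equips $R$ with the product $r\lhd' r' := r\cdot \mu(r')$; but Peiffer's identity (axiom (2)) says precisely that $r\cdot\mu(r') = r\lhd r'$, so the reconstructed rack product on $R$ coincides with the original one. Conversely, starting from a generalized augmented rack $(R,X,p)$, passing to the crossed module $p\colon X\to R$ of Example \ref{gen_augm_rack}, and then back, returns the same rack $R$, the same $R$-module $X$ (the action is untouched), and the same map $p$. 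Hence the two operations are mutually inverse, giving the claimed one-to-one correspondence. I expect the main obstacle to be purely bookkeeping: one must be careful that ``$R$-module'' in Definition \ref{definition_rack_action} is the same notion of action that appears in the crossed module axioms, and that the phrase ``acts by automorphisms'' on the crossed-module side matches the fact, verified in Example \ref{gen_augm_rack}, that the module action automatically respects the induced rack product on $X$ — so no information is lost or added in either direction. I would close by remarking that this correspondence is in fact functorial, extending to an equivalence of the corresponding categories, though the bijection on objects is all that the statement requires.
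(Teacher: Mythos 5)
Your proof is correct and follows the same route as the paper: one direction is Example \ref{gen_augm_rack}, the converse is forgetting the rack structure on the source, and the round trip is the identity because Peiffer's identity forces the reconstructed product $r\cdot\mu(r')$ to agree with the original $r\lhd r'$. The paper compresses this last verification into a single ``clearly''; you have simply made it explicit.
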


\begin{proof} In Example \ref{gen_augm_rack}, we gave the construction of a crossed module of racks from a 
generalized augmented rack. Conversely, given a crossed module of racks $\mu:R\to S$, forgetting the 
rack structure on $R$ leaves us with a generalized augmented rack. Clearly, the two constructions
are inverse to each other. \end{proof}

Thanks to this proposition, we will very often regard crossed modules simply as a rack $R$, an $R$-module
$X$ and an equivariant map $p:X\to R$. 

\begin{prop}  \label{cr_mod_racks_to_cr_mod_grps}
The functor ${\rm As}:{\tt Racks}\to{\tt Grp}$ sends crossed modules of racks to crossed modules 
of groups. 
\end{prop}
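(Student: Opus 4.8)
The plan is to take a crossed module of racks $\mu\colon R\to S$, apply the functor ${\rm As}$ to get a group homomorphism ${\rm As}(\mu)\colon {\rm As}(R)\to {\rm As}(S)$, and then check that the three pieces of data required for a crossed module of groups (a group homomorphism, an action of ${\rm As}(S)$ on ${\rm As}(R)$ by automorphisms, equivariance, and the Peiffer identity) are all present. The group homomorphism is ${\rm As}(\mu)$, so the real work is constructing the action and verifying the two identities.

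First I would produce the action of ${\rm As}(S)$ on ${\rm As}(R)$. The action of $S$ on $R$ by automorphisms is, by Lemma \ref{rack_action} together with the "acts by automorphisms" condition, the same as a morphism of racks $S\to{\rm Conj}({\rm Aut}(R))$, where ${\rm Aut}(R)$ denotes the group of rack automorphisms of $R$. By the universal property of ${\rm As}$ (the Lemma following Definition of ${\rm As}$), this extends uniquely to a group homomorphism ${\rm As}(S)\to{\rm Aut}(R)$. To land in ${\rm Aut}({\rm As}(R))$ rather than just ${\rm Aut}(R)$, I would note that ${\rm As}$ is a functor, so each rack automorphism of $R$ induces a group automorphism of ${\rm As}(R)$; this gives a group homomorphism ${\rm Aut}(R)\to{\rm Aut}({\rm As}(R))$, and composing yields the desired action ${\rm As}(S)\to{\rm Aut}({\rm As}(R))$. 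On generators $i(r)\in{\rm As}(R)$ and $i(s)\in{\rm As}(S)$ this action is forced to be $i(r)\cdot i(s) = i(r\cdot s)$, which is the concrete formula I would carry into the identity checks.

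Next I would check equivariance. We want ${\rm As}(\mu)(x\cdot a) = {\rm As}(\mu)(x)\cdot a$ for $x\in{\rm As}(R)$, $a\in{\rm As}(S)$, where on the right ${\rm As}(S)$ acts on itself by conjugation. Since both sides are maps that are multiplicative in $x$ and respect the group structure in $a$, it suffices to verify it on generators $x=i(r)$, $a=i(s)$. There the left side is ${\rm As}(\mu)(i(r\cdot s)) = i(\mu(r\cdot s)) = i(\mu(r)\lhd s)$ using equivariance of $\mu$, and the right side is $i(s)^{-1}i(\mu(r))i(s) = i(\mu(r)\lhd s)$ by the defining relation in ${\rm As}(S)$; so they agree. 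For the Peiffer identity I would similarly reduce to generators: we need $x\cdot {\rm As}(\mu)(y) = (y)^{-1}xy$ in ${\rm As}(R)$ for $x,y\in{\rm As}(R)$, and since the action is multiplicative in $y$ it is enough to check $y=i(r')$, whereupon $x\cdot i(\mu(r')) = x\cdot (i\circ\mu)(r')$, which on a generator $x=i(r)$ equals $i(r\cdot\mu(r')) = i(r\lhd r')$ by Peiffer for the rack crossed module, and $i(r)^{-1}i(r')^{-1}i(r)i(r')$ — wait, the defining relation gives $i(r)^{-1}i(r')^{-1}i(r)i(r')\cdot(\ldots)$; more precisely $i(r\lhd r') = i(r')^{-1}i(r)i(r')$, which is exactly conjugation by $i(r')$, so the identity holds on generators and hence everywhere since both sides of the Peiffer identity are (anti)homomorphic in the appropriate variables.

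The main obstacle, and the step deserving the most care, is the reduction-to-generators arguments: one must check that "both sides agree on generators" actually implies "both sides agree everywhere." For equivariance this is clean because $x\mapsto {\rm As}(\mu)(x)\cdot a$ and $x\mapsto a^{-1}\,{\rm As}(\mu)(x)\,a$ are both group homomorphisms in $x$, and in $a$ one uses that conjugation and the action are both homomorphic. For the Peiffer identity one has to be slightly more careful: $y\mapsto x\cdot{\rm As}(\mu)(y)$ is multiplicative in $y$ only because the action of ${\rm As}(S)$ is by automorphisms and ${\rm As}(\mu)$ is a homomorphism, and $y\mapsto y^{-1}xy$ is an anti-homomorphism in $y$ — but the action $x\cdot -$ precomposed with ${\rm As}(\mu)$ is a left action written on the right, so it is also anti-multiplicative in the acting element, and the two match up. Once this bookkeeping is pinned down, everything else is the routine generator computation sketched above. \hfill$\square$
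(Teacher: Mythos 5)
Your proposal is correct and follows essentially the same route as the paper: extend $\mu$ and the action via the universal property of ${\rm As}$, then verify equivariance and the Peiffer identity on generators and propagate using multiplicativity of the maps involved. The only (cosmetic) difference is that you build the action of ${\rm As}(S)$ on ${\rm As}(R)$ by factoring through ${\rm Aut}(R)\to{\rm Aut}({\rm As}(R))$ via functoriality of ${\rm As}$, whereas the paper extends the action to the free group $F(R)$ by hand and uses the by-automorphisms condition $(r_1\lhd r_2)\cdot s=(r_1\cdot s)\lhd(r_2\cdot s)$ to see that it descends to the quotient --- the same argument in different packaging.
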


\begin{proof} Let $p:R\to S$ be a crossed module of racks. Then by functoriality, 
$p$ extends to a group homomorphism
$p:{\rm As}(R)\to{\rm As}(S)$, which is just $p$ on elements of word length one.

The rack action $R\times S\to R$ gives rise to a rack
morphism $S\to{\rm Bij}(R)$ that extends by the universal property to a group homomorphism
${\rm As}(S)\to{\rm Bij}(R)$. The action of ${\rm As}(S)$ on $R$ is then extended to ${\rm As}(R)$
demanding that it should be an action by group automorphisms:
\[(r_1r_2)\cdot s\,=\,(r_1\cdot s)(r_2\cdot s),\]
for all $r_1,r_2\in{\rm As}(R)$ and all $s\in{\rm As}(S)$. This equation first extends the action 
of ${\rm As}(S)$
to an action on the free group $F(R)$, but as $(r_1\lhd r_2)\cdot s=(r_1\cdot s)\lhd(r_2\cdot s)$, 
this passes to the quotient ${\rm As}(R)$ of $F(R)$. 

The map $p$ is equivariant, i.e. $p(r\cdot s)=p(r)\lhd s=s^{-1}p(r)s$ for all $r\in {\rm As}(R)$ and all 
$s\in {\rm As}(S)$, because this statement holds for elements of word length one, and extends to all elements
$r\in {\rm As}(R)$ by the fact that $p$ is a group homomorphism and the action is an action by automorphisms.
It extends finally 
to all $s\in {\rm As}(S)$ using the action property. 

The Peiffer identity is shown in a similar way.\end{proof}

\begin{rem}
One could investigate whether the functors ${\rm Aut}:{\tt Racks}\to{\tt Grp}$ and 
${\rm Bij}:{\tt Racks}\to{\tt Grp}$ also send crossed modules of racks to crossed modules of groups.
\end{rem}                

One can do the replacement of racks by the associated groups also partially, i.e. replace in a 
crossed module $p:X\to R$ the rack $R$ by the group ${\rm As}(R)$. Note that the statements of the following two propositions compose to give back the statement of the previous proposition.  

\begin{prop}   \label{cr_mod_racks_to_augmented_racks}
Given a crossed module of racks, the corresponding map $p:X\to{\rm As}(R)$ is an augmented rack. 
\end{prop}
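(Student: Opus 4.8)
The plan is to show that the given crossed module of racks $p : X \to R$, once we replace $R$ by its associated group ${\rm As}(R)$, satisfies the axioms of an augmented rack over the group $G := {\rm As}(R)$. By Proposition \ref{cr_mod_vs_gen_augm_rack} we may regard the crossed module as a rack $R$, an $R$-module $X$, and an equivariant map $p : X \to R$, where $X$ carries the rack product $x \lhd y := x \cdot p(y)$. First I would recall that the $R$-action on $X$ extends to an ${\rm As}(R)$-action: the action gives a rack morphism $R \to {\rm Bij}(X)$, hence by the universal property of ${\rm As}(R)$ (the Lemma on ${\rm As}$ being left adjoint to ${\rm Conj}$) a group homomorphism ${\rm As}(R) \to {\rm Bij}(X)$, i.e.\ a genuine group action of $G$ on the set $X$. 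Likewise, composing $p$ with the canonical map $i : R \to {\rm As}(R)$ gives a map $X \to {\rm As}(R)$, which I continue to denote $p$.

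The substantive point is the augmentation identity $p(x \cdot g) = g^{-1} p(x) g$ for all $g \in {\rm As}(R)$ and $x \in X$, where the right-hand side is conjugation in ${\rm As}(R)$. For $g = i(r)$ a generator, equivariance of the original map together with the defining relation $i(r') \lhd i(r) = i(r)^{-1} i(r') i(r)$ in ${\rm As}(R)$ gives
\[
p(x \cdot r) = i\big(p(x) \lhd r\big) = i(r)^{-1}\, i(p(x))\, i(r) = i(r)^{-1}\, p(x)\, i(r),
\]
so the identity holds on generators. I would then bootstrap to arbitrary words in ${\rm As}(R)$: if the identity holds for $g$ and for $h$, then using that the action is an action one gets $p(x \cdot (gh)) = p((x \cdot g) \cdot h) = h^{-1} p(x \cdot g) h = h^{-1} g^{-1} p(x) g h = (gh)^{-1} p(x) (gh)$, and the case of inverses $g^{-1}$ follows by substituting $x \cdot g^{-1}$ for $x$. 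Since generators and their inverses generate ${\rm As}(R)$, the augmentation identity holds for all $g \in G$.

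Finally I would check that the rack structure on $X$ induced by this augmented-rack data agrees with the original one: the augmented rack $p : X \to G$ induces $x \lhd x' := x \cdot p(x')$, and since $p(x')$ here means $i(p_{\mathrm{orig}}(x')) \in {\rm As}(R)$ acting on $X$ via the extended action — which on elements coming from $R$ is just the original $R$-action — this recovers exactly $x \cdot p_{\mathrm{orig}}(x')$, the original product. Hence nothing is lost: $p : X \to {\rm As}(R)$ is an augmented rack inducing the given rack on $X$. The main obstacle, such as it is, is purely bookkeeping: making sure the extension of the $R$-action to ${\rm As}(R)$ is well-defined (which is handled cleanly by the universal property / adjunction already recorded in the excerpt) and that the inductive extension of the augmentation identity respects both multiplication and inversion in ${\rm As}(R)$; no genuinely new idea beyond the argument already used in Proposition \ref{cr_mod_racks_to_cr_mod_grps} is required.
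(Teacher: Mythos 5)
Your proposal is correct and follows essentially the same route as the paper: extend the $R$-action on $X$ to an ${\rm As}(R)$-action via the universal property, verify the augmentation identity on word-length-one elements using the equivariance $p(x\cdot r)=p(x)\lhd r$ together with the defining relation of ${\rm As}(R)$, and then propagate it to arbitrary words using the action property. Your extra inductive details (products and inverses) and the final compatibility check of the induced rack structure merely flesh out what the paper leaves implicit.
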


\begin{proof} The rack homomorphism $p:X\to R$ extends by the universal property to a map $p:X\to{\rm As}(R)$.
The action of $R$ on $X$ (given by a rack homomorphism $R\to{\rm Bij}(X)$) extends by the universal
property to a group action of ${\rm As}(R)$ on $X$. The identity
\[p(x\cdot r)\,=\,r^{-1}p(x)r\]
is true for elements $r$ of word length one in ${\rm As}(R)$ (and all elements $x\in X$), 
because of $p(x\cdot r)=p(x)\lhd r$. 
This then extends to all elements using the action property. \end{proof}

In the same way, one can replace the $G$-set $X$ in an augmented rack, regarded as a rack, by the
associated group ${\rm As}(X)$:

\begin{prop}    \label{augm_racks_to_cr_mod_grps}
Given an augmented rack $p:X\to G$, the induced map $p:{\rm As}(X)\to G$ is a crossed module of groups.
\end{prop}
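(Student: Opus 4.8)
The plan is to build the crossed module of groups structure on $p:{\rm As}(X)\to G$ one axiom at a time, bootstrapping every identity from the generating set $i(X)\subseteq{\rm As}(X)$. Throughout I would use the computation already carried out to show that an augmented rack is a crossed module of racks (applied with $R={\rm Conj}(G)$): namely that $X$ carries the rack operation $x\lhd x':=x\cdot p(x')$, that $p:X\to{\rm Conj}(G)$ is then an equivariant morphism of racks, and — crucially — that $G$ acts on the \emph{rack} $X$ by automorphisms, i.e.\ $(x\lhd x')\cdot g=(x\cdot g)\lhd(x'\cdot g)$ for all $g\in G$ and $x,x'\in X$. Since $p:X\to{\rm Conj}(G)$ is a morphism of racks, the universal property of ${\rm As}(X)$ produces a unique group homomorphism $\bar p:{\rm As}(X)\to G$ restricting to $p$ on $i(X)$; this is the candidate crossed module morphism.

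Next I would construct the $G$-action on ${\rm As}(X)$. Each bijection $(\cdot g):X\to X$ extends uniquely to a group automorphism of the free group $F(X)$, and this assignment is a right action of $G$ on $F(X)$ by group automorphisms. Because $G$ acts on the rack $X$ by automorphisms, this action carries a defining relator $y^{-1}x^{-1}y(x\lhd y)$ of ${\rm As}(X)$ to $(y\cdot g)^{-1}(x\cdot g)^{-1}(y\cdot g)\big((x\cdot g)\lhd(y\cdot g)\big)$, which is again a relator of the same shape; applying also $g^{-1}$ shows the normal subgroup of relations is preserved, so the action descends to an action of $G$ on ${\rm As}(X)$ by group automorphisms, restricting to the given $G$-set structure on $i(X)$.

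It then remains to verify the two crossed-module axioms. For equivariance, fix $g\in G$ and compare the two group homomorphisms ${\rm As}(X)\to G$ given by $\xi\mapsto\bar p(\xi\cdot g)$ — a homomorphism because $(\cdot g)$ is a group automorphism of ${\rm As}(X)$ and $\bar p$ is a homomorphism — and $\xi\mapsto g^{-1}\bar p(\xi)g$; on a generator $x\in X$ they agree by the augmentation identity $p(x\cdot g)=g^{-1}p(x)g$, hence they agree everywhere. For the Peiffer identity, for each $\xi'\in{\rm As}(X)$ consider the two group automorphisms of ${\rm As}(X)$ given by $\phi_{\xi'}:\xi\mapsto\xi\cdot\bar p(\xi')$ and $\psi_{\xi'}:\xi\mapsto(\xi')^{-1}\xi\xi'$. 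Using that $\bar p$ is a homomorphism and that the $G$-action on ${\rm As}(X)$ is a right action by automorphisms, one checks $\phi_{\xi_1'\xi_2'}=\phi_{\xi_2'}\circ\phi_{\xi_1'}$ and $\phi_{(\xi')^{-1}}=\phi_{\xi'}^{-1}$, and the identical relations for $\psi$; hence $H:=\{\xi'\in{\rm As}(X):\phi_{\xi'}=\psi_{\xi'}\}$ is a subgroup. It contains every generator $x'\in X$: the maps $\phi_{x'}$ and $\psi_{x'}$ are both group homomorphisms, and on a generator $x\in X$ we have $x\cdot\bar p(x')=x\lhd x'=(x')^{-1}xx'$ in ${\rm As}(X)$ — the last equality being literally the defining relation of ${\rm As}(X)$. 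Since $i(X)$ generates ${\rm As}(X)$, we get $H={\rm As}(X)$, which is the Peiffer identity.

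The only step that calls for genuine care — and the one I expect to be the main subtlety — is the extension of the Peiffer identity from generators to all of ${\rm As}(X)$: conjugation is an \emph{anti}-homomorphism in the conjugating variable, so one cannot just "extend linearly" in $\xi'$ and must instead argue, as above, that the set of $\xi'$ for which the identity holds is a subgroup that happens to contain $X$.
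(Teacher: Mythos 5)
Your proposal is correct and follows essentially the same route as the paper: obtain $\bar p$ from the universal property of ${\rm As}$, extend the $G$-action to $F(X)$ by group automorphisms and descend to ${\rm As}(X)$ using $(x\lhd x')\cdot g=(x\cdot g)\lhd(x'\cdot g)$, then verify equivariance and the Peiffer identity on generators and propagate. The paper dispatches the Peiffer identity with ``shown in a similar way''; your observation that both $\xi\mapsto\xi\cdot\bar p(\xi')$ and $\xi\mapsto(\xi')^{-1}\xi\xi'$ define right actions in $\xi'$, so that the set where they agree is a subgroup containing $i(X)$, is exactly the detail needed to make that step rigorous.
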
 

\begin{proof} Regarding $X$ as a rack, the rack morphism $p:X\to {\rm Conj}(G)$ gives rise by the universal 
property to a group homomorphism $p:{\rm As}(X)\to G$. Then $G$ acts on the subrack $X\subset{\rm As}(X)$
by automorphisms, and imposing
\[(x_1x_2)\cdot g\,=\,(x_1\cdot g)(x_2\cdot g)\]
for all $x_1,x_2\in X$ and all $g\in G$ extends the action on $X$ to an action on the free group 
$F(X)$.  Then, using  $(x_1\lhd x_2)\cdot g=(x_1\cdot g)\lhd(x_2\cdot g)$, it extends to an action 
by automorphisms on the quotient 
${\rm As}(X)$ of $F(X)$. 

One shows the equivariance of $p$ and the Peiffer identity as in the proof of
Proposition \ref{cr_mod_racks_to_cr_mod_grps}. \end{proof}

\begin{rem}
Proposition \ref{augm_racks_to_cr_mod_grps} is due to Fenn-Rourke \cite{FenRou} p.356.
\end{rem}

\begin{rem}
The proofs of Propositions \ref{cr_mod_racks_to_cr_mod_grps} and \ref{augm_racks_to_cr_mod_grps} are
very similar, but observe the difference in their the statements. 
\end{rem}

We therefore have the three main classes of examples:
\[{\rm cr-mod}({\tt Grp})\,\subset\,{\tt augm\,\,Racks}\,\subset\,{\rm cr-mod}({\tt Racks})\]
Furthermore, we have the pair of adjoint functors 
\[\xymatrix{{\rm Conj} : {\tt Grp}    \ar@<4pt>[r]
& \ar[l]   {\tt Racks} : {\rm As}  } \]
which extend to functors going back and forth between these classes.  
The (elementary) 
composition functors which arise have components of the form
\[{\rm Conj}({\rm As}(R))\]
for a rack $R$, or
\[{\rm As}({\rm Conj}(G))\]
for a group $G$. In general ${\rm As}({\rm Conj}(G))$ is far from being $G$
and ${\rm Conj}({\rm As}(R))$ is far from being $R$. 
This is the information loss one suffers by going from crossed modules of racks to augmented racks, or 
from augmented racks to crossed modules of groups.      

\subsection{Algebraic structure on the category ${\rm cr-mod}({\tt Racks})$}

It is well-known (see for example \cite{Kas} p. 319) that the category of 
augmented racks over a fixed group carries a braiding:

\begin{lem} 
For two augmented racks $p_i:X_i\to G$, $i=1,2$, with respect to a fixed group $G$, their tensor product 
$X\otimes Y$ is defined as
$X\times Y$ with the action \[(x,y)\cdot g\,=\,(x\cdot g,y\cdot g)\] and the equivariant map $p:X\times Y
\to G$ defined by $p(x,y)=p_1(x)p_2(y)$. Then the formula
\[c_{X,Y}:X\otimes Y\to Y\otimes X,\,\,\,\,\,\,c_{X,Y}(x,y)\,:=\,(y,x\cdot p(y))\]
defines a braiding on the category of augmented racks over the fixed group $G$. 
\end{lem}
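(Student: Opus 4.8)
The plan is to verify directly that the stated data $(X\otimes Y, p)$ is an augmented rack over $G$ and that $c_{X,Y}$ is a well-defined natural isomorphism satisfying the two hexagon axioms; the associativity, unit, and symmetry bookkeeping is routine once the hexagons are in place. First I would check that $X\otimes Y$ is an augmented rack: the componentwise action of $G$ on $X\times Y$ is clearly an action by bijections, and the augmentation identity $p((x,y)\cdot g)=g^{-1}p(x,y)g$ follows from $p_1(x\cdot g)p_2(y\cdot g)=g^{-1}p_1(x)g\,g^{-1}p_2(y)g=g^{-1}p_1(x)p_2(y)g$ using the augmentation identities for $p_1$ and $p_2$ separately. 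Recall from the discussion after the definition of augmented rack that the induced rack operation on $X\otimes Y$ is then $(x,y)\lhd(x',y')=(x,y)\cdot p(x',y')=(x\cdot p_1(x')p_2(y'),\,y\cdot p_1(x')p_2(y'))$.

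Next I would show $c_{X,Y}$ is a morphism of augmented racks $X\otimes Y\to Y\otimes X$. It is a bijection with inverse $(y,x)\mapsto(x\cdot p(y)^{-1},\dots)$ adjusted appropriately — more precisely $c_{Y,X}^{-1}$ is forced by the braiding axioms, but one checks bijectivity by exhibiting the two-sided inverse $(y,x)\mapsto (x\cdot (p_1(x)^{-1}\text{-corrected term}),y)$; the clean way is to note $c_{X,Y}$ is the restriction of the Yetter–Drinfel'd braiding, which is invertible. Equivariance of $c_{X,Y}$ amounts to $c_{X,Y}((x,y)\cdot g)=(y\cdot g,(x\cdot g)\cdot p(y\cdot g))=(y\cdot g,(x\cdot g)\cdot g^{-1}p(y)g)=(y\cdot g,(x\cdot p(y))\cdot g)=c_{X,Y}(x,y)\cdot g$, again using the augmentation identity. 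Compatibility with the $p$-maps is $p_{Y\otimes X}(c_{X,Y}(x,y))=p_2(y)p_1(x\cdot p(y))=p_2(y)\,p(y)^{-1}p_1(x)p(y)$, and since $p(y)=p_1(x)p_2(y)$ this must be checked to equal $p_1(x)p_2(y)=p_{X\otimes Y}(x,y)$ — here one uses that conjugating $p_1(x)$ by $p_2(y)$ and multiplying on the left by $p_2(y)$ telescopes correctly; I would write it out as $p_2(y)\,(p_1(x)p_2(y))^{-1}p_1(x)(p_1(x)p_2(y))=p_2(y)p_2(y)^{-1}p_1(x)^{-1}p_1(x)p_1(x)p_2(y)=p_1(x)p_2(y)$. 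Naturality in both variables is immediate from the formula.

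The substantive step is the two hexagon identities $c_{X,Y\otimes Z}=(\id_Y\otimes c_{X,Z})\circ(c_{X,Y}\otimes\id_Z)$ and its mirror. On elements the left side sends $(x,y,z)\mapsto(y,z,x\cdot p_1(x)\cdots)$ — concretely $(x,(y,z))\mapsto((y,z),x\cdot p(y,z))=(y,z,x\cdot p_1(y)p_2(z))$ after identifying $p_{Y\otimes Z}(y,z)=p_1(y)p_2(z)$, while the right side computes $(x,y,z)\xmapsto{c_{X,Y}\otimes\id}(y,x\cdot p_1(y),z)\xmapsto{\id\otimes c_{X,Z}}(y,z,(x\cdot p_1(y))\cdot p_2(z))=(y,z,x\cdot p_1(y)p_2(z))$; these agree. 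The mirror hexagon is analogous, and this is where I expect the only real friction, since one must be careful that the relevant $p$ used inside $c_{X,Z}$ is $p_1$ of the first factor rather than the full $p$ — i.e. one is tensoring over the fixed $G$, not iterating the construction. Finally the naturality squares together with the hexagons give that $(c_{X,Y})$ is a braiding; I would remark that it is in fact the braiding coming from viewing augmented racks over $G$ as Yetter–Drinfel'd modules over the group Hopf algebra $\Z[G]$ (as recalled after the definition of augmented rack), which both explains the formula and guarantees invertibility of $c$, so the proof reduces to the element-level verifications above. \qed
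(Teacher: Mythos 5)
The paper offers no proof of this lemma at all: it is stated as well known, with a pointer to Kassel and to the identification (recalled in Section~2) of augmented racks over $G$ with Yetter--Drinfel'd modules over the group $G$. Your direct element-level verification is therefore a genuinely more self-contained route, and the computations you actually write out --- the augmentation identity for $p(x,y)=p_1(x)p_2(y)$, equivariance of $c_{X,Y}$, compatibility with the augmentations, naturality, and the first hexagon --- are all correct; the mirror hexagon $c_{X\otimes Y,Z}=(c_{X,Z}\otimes\id_Y)\circ(\id_X\otimes c_{Y,Z})$ is indeed the same one-line computation, since $(x,y)\cdot p_Z(z)=(x\cdot p_Z(z),y\cdot p_Z(z))$. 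Two points need tightening. First, the symbol $p(y)$ in the braiding formula must be read as $p_2(y)$, the augmentation of the second tensor factor evaluated at $y$; you waver between this reading and the full $p(x,y)=p_1(x)p_2(y)$ in your compatibility check (both happen to give the right answer there, but only the first is the intended braiding and only the first makes the hexagons close up, as your own hexagon computation implicitly assumes). Second, your treatment of invertibility is garbled (the ``$p_1(x)^{-1}$-corrected term''), and the appeal to the Yetter--Drinfel'd formalism is unnecessary: the two-sided inverse is simply $(y,x')\mapsto(x'\cdot p_2(y)^{-1},y)$, which exists because each $g\in G$ acts bijectively, and it is manifestly a morphism of augmented racks by the same computations. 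With those repairs your argument is complete and, compared with the paper's bare citation, has the advantage of making visible exactly where the augmentation identity enters.
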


\begin{rem}
For the larger category of crossed modules of racks over a fixed rack $R$, one can use Proposition
\ref{cr_mod_racks_to_augmented_racks} to define a braiding on the category of crossed modules of 
racks over a fixed rack. 
\end{rem} 

\section{Crossed modules and categorical racks} \label{second}

\subsection{From categories to crossed modules}

It is well known that crossed modules of groups $\mu:M\to N$ are in one-to-one correspondence with 
category objects in the category of groups ${\tt Grp}$, also known as strict $2$-groups or 
categorical groups \cite{BL, FB}. 
We recall the correspondence given in \cite{BL}: Given $\mu:M\to N$, we construct a strict $2$-group by taking 
$G_0:=N$ as the group of objects and the semi-direct product $G_1:=M\rtimes N$ as the group of morphisms. 
In the other direction, given a strict $2$-group $\xymatrix{G_1\ar@<2pt>[r]^{s}
\ar@<-2pt>[r]_{t} & G_0}$ we take $N:=G_0$
and $M:=\ker(s)$, where $s$ is the source map. The map $\mu$ is given by the restriction of the target
map $t$ to $\ker(s)$. We will call the passage to $t:\ker(s)\to G_0$ the {\it standard construction}.
From this well-known construction, we note following lemma:

\begin{lem}   \label{semi-direct-product-construction}
Let $M$ and $N$ be groups such that $N$ acts on $M$ by automorphisms, and let $\mu:M\to N$ be an
equivariant homomorphism. Then the semi-direct product $M\rtimes N$ carries a unique structure of a category
such that the objects are $N$, the morphisms $M\rtimes N$, the source $(m,n)\mapsto n$, the target
$(m,n)\mapsto \mu(m)n$ and the identity $n\mapsto (1_M,n)$. 
\end{lem}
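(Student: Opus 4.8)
The plan is to exhibit the category structure explicitly, verify it satisfies the category axioms, and then argue uniqueness. First I would write down the candidate data: objects $N$, morphisms $M \rtimes N$, source $s(m,n) = n$, target $t(m,n) = \mu(m)n$, identity $e(n) = (1_M, n)$, and — crucially, since the lemma only mentions $s$, $t$, $e$ — I must also produce the composition. The natural guess is that $(m', n')$ and $(m,n)$ are composable when $s(m',n') = t(m,n)$, i.e. $n' = \mu(m)n$, and that the composite is $(m'm, n)$; one checks $s(m'm,n) = n = s(m,n)$ and $t(m'm, n) = \mu(m'm)n = \mu(m')\mu(m)n = \mu(m')n' = t(m',n')$, so the composite has the right source and target.

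Next I would verify the category axioms. Associativity of composition is immediate from associativity in $M$ (the second coordinate is just carried along). For the unit laws: the identity on $n$ is $(1_M, n)$, and precomposing/postcomposing a morphism $(m,n): n \to \mu(m)n$ with the appropriate identities gives $(1_M \cdot m, n) = (m,n)$ and $(m \cdot 1_M, n) = (m,n)$, using that $\mu(1_M) = 1_N$ so $t(1_M, \mu(m)n) = \mu(m)n$ matches. I would also note $s \circ e = t \circ e = \mathrm{id}_N$. This shows $M \rtimes N$ with these structure maps is a category (internal to sets; that the structure maps are in fact group homomorphisms, making it a category object in $\mathtt{Grp}$, is the content of the full correspondence recalled just before the lemma and uses equivariance of $\mu$, but for the bare statement of the lemma one only needs a category).

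For uniqueness, the point is that in any category, composition is determined by the other data together with the requirement that it be a category: given the source, target, and identities, a morphism $f: a \to b$ can be recovered, and composition $g \circ f$ must satisfy the unit laws. More concretely, for this internal category the set of composable pairs is forced to be $\{((m',n'),(m,n)) : n' = \mu(m)n\}$, and any composition law $\circ$ must send $((m',n'),(m,n))$ to some element with source $n$ and target $t(m',n')$; writing the composite as $(\bar m, n)$ we need $\mu(\bar m)n = \mu(m')\mu(m)n$, so $\mu(\bar m) = \mu(m')\mu(m)$ — this pins down $\mu(\bar m)$ but not $\bar m$ itself in general. To finish uniqueness one uses the unit laws: taking $m = 1_M$ forces $(\bar m, n') = (m', n')$ hence $\bar m = m'$ when composing with an identity, and taking $m' = 1_M$ similarly; the general case $\bar m = m'm$ then follows by factoring $(m',n') \circ (m,n)$ through intermediate identities, or equivalently by the interchange law once one knows the structure maps are homomorphisms. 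I expect the uniqueness argument to be the main obstacle: the lemma as stated suppresses the composition map, so one has to be careful about what "unique structure of a category" means and to extract the composition from the stated data plus the category axioms rather than assuming it. The verification of the axioms themselves is routine bookkeeping in $M \rtimes N$.
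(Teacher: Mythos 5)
The paper gives no proof of this lemma at all --- it is asserted as a byproduct of the Baez--Lauda construction recalled just before it --- so your proposal has to stand on its own. The existence half does: you correctly identify the composable pairs as those with $n'=\mu(m)n$, propose the composite $(m'm,n)$, and the checks of source/target matching, associativity and the unit laws are exactly the routine bookkeeping that the paper is implicitly invoking. That part is fine and is the standard construction.

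The uniqueness half, however, has a genuine gap, and you have located it without closing it. Source, target and identities do \emph{not} determine composition in a bare category: take $\mu$ trivial (which is equivariant), so that every morphism is an endomorphism, ${\rm Hom}(n,n)=M\times\{n\}$ and all other hom-sets are empty; then $(m',n)\circ'(m,n):=(mm',n)$ (the opposite multiplication) has the same $s$, $t$, $e$ and also satisfies all the category axioms, so for nonabelian $M$ uniqueness simply fails in the category-in-sets reading. Your proposed repair by ``factoring $(m',n')\circ(m,n)$ through intermediate identities'' cannot work, since a general morphism is not a composite of identities. The repair that does work is the second one you mention: if ``structure of a category'' is read as \emph{category object in} ${\tt Grp}$ --- the reading the surrounding discussion of strict $2$-groups intends --- then the interchange law forces $g\circ f=g\,e_{t(f)}^{-1}f$ in the group $M\rtimes N$, which evaluates to $(m'm,n)$ and yields uniqueness. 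But you should then say explicitly that (i) this uses the group structure on $M\rtimes N$ and not just the category axioms, and (ii) the composition $(m',n')\circ(m,n)=(m'm,n)$ is a group homomorphism on the subgroup of composable pairs precisely when the Peiffer identity holds, which the lemma does not assume. So under the sets reading existence holds but uniqueness fails, and under the ${\tt Grp}$ reading uniqueness holds but existence can fail; a complete proof must commit to one reading and either add the missing hypothesis or weaken the uniqueness claim. This is as much a defect of the statement as of your argument, but your proof as written papers over it.
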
 

In fact, there are three more objects which are equivalent to strict $2$-groups, see \cite{Lod}:

\begin{enumerate}
\item groups $G$ with a subgroup $N$ and two homomorphisms $s,t:G\to N$ with $s|_N=\id_N$,
$t|_N=\id_N$ and $[\ker(s),\ker(t)]=1$ (these are called {\it 1-cat groups}),
\item simplicial groups with Moore complex of length one,
\item group objects in the category of (small) categories.
\end{enumerate}  

We desire to have correspondences of a similar type for crossed modules of racks.

\begin{defi}
A {\bf 1-cat rack} consists of a pointed rack $R$, a subrack $N$ and two rack morphisms $s,t:R\to N$ such that
$s|_N=\id_N$, $t|_N=\id_N$ and $\ker(s)$ and $\ker(t)$ act trivially on each other.
\end{defi}

One approach to the construction of a similar correspondence to that described above in the group case
is to consider category objects in the category of racks, see \cite{CCES}. 

\begin{defi}
A {\bf strict $2$-rack} or {\bf categorical rack} is a category object in the category of racks.  
That is, a strict $2$-rack consists of 
two pointed racks $R_0$ (rack of objects)  and $R_1$ (rack of morphisms) equipped with rack morphisms 
$s,t:R_1\to R_0$ (source and 
target), $i:R_0\to R_1$ (identity-assigning) and $\circ:R_1\times_{R_0}R_1\to R_1$ 
(composition) such that the usual axioms 
of a category are satisfied. 
\end{defi}

One crucial property of the composition in a strict $2$-group is the {\it middle four exchange 
property} which simply means that the composition $\circ$ is a morphism of groups:
\[(g_1g_2)\circ(f_1f_2)\,=\,(g_1\circ f_1)(g_2\circ f_2).\]
We observe that this property holds for all morphisms $f_1:a\mapsto b$, $f_2:a'\mapsto b'$, $g_1:b\mapsto c$,
$g_2:b'\mapsto c'$, where the constraints on the domains and ranges reflect the composability. 

From this property, one deduces that the kernel of the source map, $\ker(s)$, and the kernel of the 
target map, $\ker(t)$, commute and that the group product on $G_1$ uniquely determines the composition.
Note that as the inclusion of identities the map $G_0\hookrightarrow G_1$ is a group homomorphism and the 
identity $1\in G_1$ is both the identity with respect to the composition and unit with respect 
to the group product. 
We see that we have a similar property in the framework of strict $2$-racks:

\begin{prop}  \label{MFE}
The middle four exchange property for a strict $2$-rack implies that $\ker(s)$ and $\ker(t)$  
act trivially on each other. 
\end{prop}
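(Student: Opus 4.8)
The plan is to mimic the group-theoretic argument that the middle four exchange (equivalently, functoriality of $\circ$) forces $[\ker(s),\ker(t)]=1$, but carried out with rack operations rather than group operations. The key structural fact I would use is that in any category object, an element $f\in R_1$ with $s(f)=1$ and an element $g\in R_1$ with $t(g)=1$ can be composed after suitably adjusting basepoints, because the basepoint $1\in R_1$ serves simultaneously as $i(1)$ (the identity morphism on the base object $1\in R_0$) and as the rack unit. So first I would set up notation: write $K_s=\ker(s)=\{f\in R_1 : s(f)=1\}$ and $K_t=\ker(t)=\{g\in R_1 : t(g)=1\}$, and recall from the pointed-rack axioms that $1\lhd x=1$ and $x\lhd 1=x$ in each of $R_0,R_1$, and that $s,t,i$ all preserve the basepoint.

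First I would check that for $f\in K_s$ and $g\in K_t$ the composites $f\circ g$ and $g\circ f$ (read in whichever order the source/target conventions of the paper dictate) are defined: if $g\in K_t$ then $t(g)=1=s(i(1))$, and $i(1)=1\in R_1$, so $g$ is composable with $1$ on the appropriate side; similarly $f\in K_s$ is composable with $1$. The category axioms give $f\circ 1 = f$ and $1\circ g = g$ (unit laws). Now the middle four exchange property says $\circ$ is a rack morphism $R_1\times_{R_0}R_1\to R_1$, i.e.
\[(g_1\lhd g_2)\circ(f_1\lhd f_2)\,=\,(g_1\circ f_1)\lhd(g_2\circ f_2)\]
whenever both sides are defined. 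I would apply this with the clever choice $g_1=f$, $g_2=1$, $f_1=1$, $f_2=g$, for $f\in K_s$, $g\in K_t$. One checks composability: $s(f)=1=t(1)$ so $f\circ 1$ makes sense; $s(1)=1=t(g)$ so $1\circ g$ makes sense; and on the other side $f\lhd 1 = f$ has source $s(f)\lhd s(1)=1$ while $1\lhd g$ has target $t(1)\lhd t(g) = 1$, so $(f\lhd 1)\circ(1\lhd g)$ is composable as well. The identity then reads
\[(f\lhd 1)\circ(1\lhd g)\,=\,(f\circ 1)\lhd(1\circ g),\]
whose left side is $f\circ g$ (using $f\lhd 1=f$ and $1\lhd g = g$) and whose right side is $f\lhd g$ (using the unit laws of the category). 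Hence $f\circ g = f\lhd g$. A symmetric choice $g_1=1,g_2=g,f_1=f,f_2=1$ gives $f\circ g = g\lhd f$ as well — or, more to the point, running the argument with the roles of the two factors swapped yields $g\circ f = g\lhd f$, and one similarly identifies these composites with each other. The upshot is that for $f\in K_s$, $g\in K_t$ we get $f\lhd g = g\lhd f = f\circ g$, and a further comparison (e.g. taking $g_1=f, g_2=g, f_1=1, f_2=1$, giving $(f\lhd g)\circ(1\lhd 1) = (f\circ 1)\lhd(g\circ 1)$, i.e. $f\lhd g = f\lhd g$, combined with taking $g_1=1,g_2=1,f_1=f,f_2=g$ giving $1\circ(f\lhd g)=\dots$) pins down that $f\lhd g = f$ and $g\lhd f = g$. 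That is exactly the statement that $K_s$ and $K_t$ act trivially on one another, which is what the proposition asserts.

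The main obstacle I expect is bookkeeping the source/target conventions so that every invocation of the middle four exchange is genuinely applied to a composable pair in $R_1\times_{R_0}R_1$ — the group proof hides this because the group multiplication is globally defined, whereas here I must verify at each step that matching source/target conditions hold (and these conditions interact with $\lhd$ via the fact that $s,t$ are rack morphisms). A secondary subtlety is making sure the unit element $1\in R_1$ really does double as both $i(1)$ and the rack basepoint; this should follow from the definition of a strict $2$-rack as a category object in pointed racks, where $i$ is required to be a morphism of pointed racks, hence $i(1)=1$, and the preceding discussion in the excerpt about the group case indicates the authors intend exactly this. Once these compatibilities are in hand, the computation collapses to the short chains of identities above, so the real work is purely in choosing the right substitutions into the exchange law and tracking basepoints.
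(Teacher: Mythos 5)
Your substitutions into the middle four exchange law are the right ones (they are in fact the same ones the paper uses), but the computation of the left-hand side contains an error that breaks the logical chain. You evaluate $1\lhd g$ as $g$; the pointed-rack axiom says $1\lhd x=1$ and $x\lhd 1=x$, so in fact $1\lhd g=1$. Consequently, with $g_1=f\in\ker(s)$, $g_2=1$, $f_1=1$, $f_2=g\in\ker(t)$, the left side of
\[(f\lhd 1)\circ(1\lhd g)\,=\,(f\circ 1)\lhd(1\circ g)\]
is $f\circ 1=f$, not $f\circ g$, while the right side is $f\lhd g$ as you say. So this single substitution already yields $f=f\lhd g$, i.e.\ $\ker(t)$ acts trivially on $\ker(s)$ --- you are done with half the proposition in one step. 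Your symmetric substitution $g_1=1$, $g_2=g$, $f_1=f$, $f_2=1$ (which forces $f\in\ker(t)$ and $g\in\ker(s)$, note the roles swap) likewise gives $f=f\lhd g$ for that pairing, which is the other half. As written, however, your intermediate conclusions $f\circ g=f\lhd g$ and $f\circ g=g\lhd f$ are false, and the ``further comparison'' you invoke to pin down $f\lhd g=f$ is, by your own display, the tautology $f\lhd g=f\lhd g$; so the argument never actually reaches the stated conclusion. The gap is entirely repaired by using $1\lhd g=1$ correctly, after which your proof collapses to exactly the paper's: the paper sets $g_1=1$ (resp.\ $f_1=1$, $g_2=1$) and exploits precisely the facts that $1$ absorbs on the left of $\lhd$ and acts as a unit for $\circ$ to extract $f_1=f_1\lhd g_2$ and $g_1=g_1\lhd f_2$. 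Your care about composability and about $i(1)=1$ serving as both categorical identity and rack basepoint is well placed and matches the paper's implicit use of these facts.
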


\begin{proof} For strict $2$-racks, the middle four exchange property states
\[(g_1\lhd g_2)\circ(f_1\lhd f_2)\,=\,(g_1\circ f_1)\lhd (g_2\circ f_2)\]
for all $f_1:a\mapsto b$, $f_2:a'\mapsto b'$, $g_1:b\mapsto c$, and
$g_2:b'\mapsto c'$. By choosing $b=c=1$, $g_1=1$ and by using the fact that the identity in 
$1\in R_1$ is both unit and identity we deduce that
\[f_1\lhd f_2\,=\,f_1\lhd (g_2\circ f_2)\]
for all $f_1\in\ker(t)$. 
 
In the special case when $a'=b'=1$ with $f_2=1$ and thus $g_2:1\mapsto c$, i.e.
$g_2\in\ker(s)$ we obtain from the above that
\[f_1\,=\,f_1\lhd g_2\]
for all $f_1\in\ker(t)$ and all $g_2\in\ker(s)$. 
This means that elements from $\ker(s)$ act trivially on elements of $\ker(t)$.

In the same way, choosing $a=b=1$ and $b'=c'=1$, $f_1=1$ and $g_2=1$, and therefore 
$f_2\in\ker(t)$ and $g_1\in\ker(s)$, we obtain
\[g_1\,=\,g_1\lhd f_2,\]
which means that $\ker(t)$ acts also trivially on $\ker(s)$.\end{proof} 
 
\begin{cor}
A strict $2$-rack has an underlying 1-cat rack, i.e. a pointed rack $R$ together with a subrack
$N$ and two homomorphisms $s,t:R\to N$ such that $s|_N=\id_N$, $t|_N=\id_N$ and $\ker(s)$ and $\ker(t)$
act trivially on each other.
\end{cor}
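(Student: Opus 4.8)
The plan is to show that a strict $2$-rack gives rise to all the data of a 1-cat rack, with the only nontrivial point — the triviality of the mutual action of $\ker(s)$ and $\ker(t)$ — already being Proposition \ref{MFE}. So the corollary is essentially a matter of unwinding the definition of a category object in ${\tt Racks}$ and identifying $R$, $N$, $s$ and $t$.

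First I would set $R:=R_1$, the rack of morphisms, and $N:=i(R_0)\subseteq R_1$, the image of the identity-assigning map. Since $i$ is a morphism of racks and, in a category object, $i$ is a split monomorphism (split by either $s$ or $t$), $N$ is a subrack of $R_1$ isomorphic to $R_0$; it is pointed by $i(1_{R_0})$, which coincides with the basepoint $1\in R_1$ since $i$ is a pointed-rack morphism. Then I would define the two homomorphisms $R\to N$ as $\tilde s:=i\circ s$ and $\tilde t:=i\circ t$; these are rack morphisms as composites of rack morphisms, they land in $N$ by construction, and the category axioms $s\circ i=\id_{R_0}=t\circ i$ immediately give $\tilde s|_N=\id_N$ and $\tilde t|_N=\id_N$ after identifying $N$ with $R_0$ via $i$. (Alternatively, and more cleanly, one regards $s,t$ themselves as maps $R_1\to R_0\cong N$.)

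Next I would observe that $\ker(\tilde s)$ and $\ker(s)$ coincide under the identification $N\cong R_0$: a morphism $f$ has $\tilde s(f)=1$ iff $s(f)=1_{R_0}$, i.e. $f$ is a morphism with source the basepoint object, and similarly for $t$. With this identification in place, Proposition \ref{MFE} says precisely that $\ker(s)$ and $\ker(t)$ act trivially on one another — here the ``action'' in question is the restriction of the rack operation $\lhd$ on $R_1$, i.e. the conjugation-type action internal to the rack, which is exactly the notion of action appearing in the definition of a 1-cat rack. This hands us the last required property.

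The main (and only mild) obstacle is bookkeeping: making sure the basepoints match up, that $i$ really is a subrack inclusion rather than merely a rack morphism, and that the notion of ``$\ker(s)$ and $\ker(t)$ act trivially on each other'' used in Proposition \ref{MFE} is literally the same notion demanded in the definition of a 1-cat rack. All of these follow directly from the axioms of a category object in ${\tt Racks}$ (unit laws for $i$, and the fact that $s,t$ split $i$), so no genuine computation is needed beyond what Proposition \ref{MFE} already provides. Hence the proof is short: exhibit $R=R_1$, $N=i(R_0)$, $s,t$ as above, verify $s|_N=\id_N$ and $t|_N=\id_N$ from the category axioms, and invoke Proposition \ref{MFE} for the mutual-triviality condition.
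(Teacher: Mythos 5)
Your proposal matches the paper's proof essentially verbatim: both set $R:=R_1$, $N:=i(R_0)$, take $s,t$ (composed with $i$) as the two homomorphisms, deduce $s|_N=\id_N$ and $t|_N=\id_N$ from $s\circ i=\id_{R_0}=t\circ i$, and invoke Proposition \ref{MFE} for the mutual triviality of the actions of $\ker(s)$ and $\ker(t)$. The extra bookkeeping you flag (basepoints, $i$ being a split mono, identifying the kernels) is correct but is left implicit in the paper.
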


\begin{proof} Indeed, define $R:=R_1$, $N:=i(R_0)$, and take source and target maps $s,t:R\to N$. 
The properties $s|_N=\id_N$ and $t|_N=\id_N$ come from $t\circ i=\id_{R_0}$ and $s\circ i=\id_{R_0}$.
By Proposition \ref{MFE}, we have that $\ker(s)$ and $\ker(t)$
act trivially on each other.\end{proof}

\begin{rem}
This correspondence from strict $2$-racks to 1-cat racks is actually functorial. 
We leave it to the interested reader to define the necessary (2-)category structure 
on both classes of objects in order to make this a mathematical statement.
Similar remarks apply to the propositions in this and the next subsection.
\end{rem}   

\begin{rem}
Unfortunately, the four equations following from the middle four exchange property in a strict $2$-rack:
\begin{enumerate}
\item $g_1\circ(f_1\lhd f_2)\,=\,(g_1\circ f_1)\lhd f_2$ with the restriction $f_2\in\ker(t)$,
\item $f_1\lhd f_2\,=\,f_1\lhd(g_2\circ f_2)$ with the restriction $f_1\in\ker(t)$,
\item $g_1\lhd g_2\,=\,g_1\lhd(g_2\circ f_1)$ with the restriction $g_1\in\ker(s)$, and
\item $(g_1\lhd g_2)\circ f_1\,=\,(g_1\circ f_1)\lhd g_2$ with the restriction $g_2\in\ker(s)$,
\end{enumerate}
do not seem to enable us to reconstruct the composition starting from the rack product, 
or vice-versa (as we can in the case for 
strict $2$-groups). We leave this observation as a question for future study.  
\end{rem}   

\begin{prop}
A strict $2$-rack gives rise, via the standard construction, to a crossed module of racks.
\end{prop}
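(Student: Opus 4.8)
The plan is to mimic, \emph{mutatis mutandis}, the standard construction recalled above for strict $2$-groups. Fix a strict $2$-rack with pointed racks of objects $R_0$ and of morphisms $R_1$, structure maps $s,t\colon R_1\to R_0$, $i\colon R_0\to R_1$, $\circ\colon R_1\times_{R_0}R_1\to R_1$, and distinguished element $1\in R_1$ which is simultaneously the base point of $R_1$, the value $i(1)$, and the identity arrow on the base object. Set $S:=R_0$ and $R:=\ker(s)=\{r\in R_1:s(r)=1\}$, and let $\mu$ be the restriction of $t$ to $\ker(s)$. First I would record that $\ker(s)$ is a subrack of $R_1$: if $s(r)=s(r')=1$ then $s(r\lhd r')=s(r)\lhd s(r')=1\lhd s(r')=1$ by the pointed-rack axioms, so $\mu\colon\ker(s)\to R_0$ is a well-defined morphism of racks. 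The action of $R_0$ on $\ker(s)$ is defined, in analogy with conjugation by $i(n)$ in the group case, by $r\cdot a:=r\lhd i(a)$; one checks $s(r\cdot a)=s(r)\lhd s(i(a))=1\lhd a=1$, so this lands in $\ker(s)$, and $r\mapsto r\cdot a$ is a bijection of $\ker(s)$ because $x\mapsto x\lhd i(a)$ is a bijection of $R_1$ whose inverse, by the same source computation together with the uniqueness clause of the pointed-rack axioms, again preserves $\ker(s)$.

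Next I would dispatch the two axioms of Definition \ref{definition_cr_mod} that involve only the rack structure, using nothing beyond self-distributivity in $R_1$, the fact that $i$ is a rack morphism, and $t\circ i=\id_{R_0}$. For the action axiom, $(r\cdot a)\cdot a'=(r\lhd i(a))\lhd i(a')$ while $(r\cdot a')\cdot(a\lhd a')=(r\lhd i(a'))\lhd i(a\lhd a')=(r\lhd i(a'))\lhd(i(a)\lhd i(a'))$, and these agree by self-distributivity. The identical manipulation with $i(a)$ replaced by an element $r'\in\ker(s)$ shows that $R_0$ acts by automorphisms, i.e.\ $(r\lhd r')\cdot a=(r\cdot a)\lhd(r'\cdot a)$. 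Equivariance is immediate: $\mu(r\cdot a)=t(r\lhd i(a))=t(r)\lhd t(i(a))=t(r)\lhd a=\mu(r)\lhd a$.

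The substantive step — and the one I expect to be the crux, since it is where the categorical structure must genuinely be used — is the Peiffer identity $r\cdot\mu(r')=r\lhd r'$ for $r,r'\in\ker(s)$. Unwinding the definitions, this is the assertion $r\lhd i(t(r'))=r\lhd r'$, i.e.\ that one may replace $r'$ by the identity arrow on its target inside the second slot of $\lhd$. Since $r'\colon 1\to t(r')$, the unit law gives $i(t(r'))\circ r'=r'$; so I would apply the middle four exchange property (concretely, the third of the four identities derived from it in the Remark immediately preceding this Proposition) with $g_1=r\in\ker(s)$, $g_2=i(t(r'))$ and the composable pair $i(t(r'))\circ r'$ — which is legitimate since $s(i(t(r')))=t(r')=t(r')$ — obtaining $r\lhd i(t(r'))=r\lhd\bigl(i(t(r'))\circ r'\bigr)=r\lhd r'$. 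This is the precise analogue of the group-case argument that $r'\,i(t(r'))^{-1}$ lies in $\ker(t)$ and hence commutes with $r\in\ker(s)$, the role of $[\ker(s),\ker(t)]=1$ now being played by the exchange law (cf.\ Proposition \ref{MFE}). Assembling the pieces, $\mu\colon\ker(s)\to R_0$ with the action $r\cdot a=r\lhd i(a)$ is a crossed module of racks, which is the standard construction.
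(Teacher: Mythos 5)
Your proof is correct, but it takes a genuinely different --- and in fact stronger --- route than the paper's. The paper proves this proposition by exhibiting only the data of a generalized augmented rack (the rack $R_0$, the $R_0$-set $\ker(s)$ with action $x\cdot r:=x\lhd i(r)$, and the equivariant map $t|_{\ker(s)}$) and then invoking Proposition \ref{cr_mod_vs_gen_augm_rack}, so that the rack product on $\ker(s)$ is \emph{defined} to be $x\lhd y:=x\cdot t(y)$ and the Peiffer identity holds by construction; the composition $\circ$ is never used. You instead verify Definition \ref{definition_cr_mod} directly for the rack structure that $\ker(s)$ inherits as a subrack of $R_1$, and the crux of your argument --- the middle four exchange applied to the composable pairs $(r,1)$ and $\bigl(i(t(r')),r'\bigr)$, which yields $r\lhd i(t(r'))=r\lhd\bigl(i(t(r'))\circ r'\bigr)=r\lhd r'$ after the unit laws kill the other three slots --- is exactly the Peiffer identity for the \emph{induced} product. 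I checked the composability and source/target bookkeeping (in particular that $1\lhd r'=1$ and that $r\lhd i(t(r'))$ has source $1$, so the left-hand composition with the identity arrow is legitimate) and the computation goes through. This is worth flagging: the remark immediately following this proposition in the paper asserts that for the induced rack structure on $\ker(s)$ ``the only thing which is not clear is Peiffer's identity'' and that there is a priori no relation between the defined and the induced products; your argument settles that question affirmatively and shows the two products coincide. What the paper's approach buys is economy, needing nothing beyond $s$, $t$, $i$ and self-distributivity; what yours buys is the sharper statement, at the cost of using the full category-object structure (the exchange law of Proposition \ref{MFE} and the unit laws).
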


\begin{proof} Recall from Proposition \ref{cr_mod_vs_gen_augm_rack} that a crossed module of 
racks consists of a rack $R$, an $R$-module $X$ and an equivariant map
$p:X\to R$. Given a strict $2$-rack $(R_0, R_1, s, t, i, \circ)$, we
define $R:=R_0$. The rack $R$ acts on $X\,:=\,\ker(s)$ by
\[x\cdot r\,:=\,x\lhd i(r).\]
Indeed, $\ker(s)$ is preserved by this action since
\[s(x\cdot r)\,=\,s(x\lhd i(r))\,=\,s(x)\lhd s(i(r))\,=\,1\lhd r\,=\,1.\]  The fact that 
this is an action follows from the rack identity:
\begin{eqnarray*}
(x\cdot r)\cdot r'&=&(x\lhd i(r))\lhd i(r')  \\
&=& (x\lhd i(r'))\lhd(i(r)\lhd i(r'))  \\
&=& (x\cdot r')\cdot (r\lhd r').
\end{eqnarray*} 
Then the map $t|_{\ker(s)}$ is equivariant, because for all $x\in X$ and all $r\in R$
\[t(x\cdot r)\,=\,t(x\lhd i(r))\,=\,t(x)\lhd t(i(r))\,=\,t(x)\lhd r.\] 
\end{proof} 

\begin{rem}
It is a natural question to ask whether the generalized augmented rack 
$t|_{\ker(s)} : \ker(s) \to R$ constructed in the proof {\it is} actually a crossed module of racks  
in the sense of Definition \ref{definition_cr_mod} rather than using Proposition 
\ref{cr_mod_vs_gen_augm_rack} to {\it make it} a crossed module of racks, 
because $\ker(s)$ already carries a rack structure as a subrack of $R_1$.

In fact, it is clear that $t|_{\ker(s)}$ is a 
morphism of racks, and it is easy to see that the above action of $R$ on $X$ is by automorphisms.
The only thing which is not clear is Peiffer's identity. 

This means that starting from a strict $2$-rack we can always {\bf define} a rack product on
$X=\ker(s)$ such that $t:X\to R$ becomes a crossed module, but there is, a priori, no relation to
the {\bf induced} rack product from $R_1$.
\end{rem}

\subsection{From crossed modules to categories}

We will now indicate how one may try to perform the reverse direction, i.e. construct strict $2$-racks
from crossed modules of racks or from 1-cat racks. For this, we use the fact that we are able to pass from 
crossed modules of racks to augmented racks (or directly to crossed modules of groups)
and from augmented racks to crossed modules of groups, see Section \ref{relation_examples}.

Indeed, what we are lacking is an analogue of Lemma \ref{semi-direct-product-construction} in the 
pure framework of racks. The most natural approach would be to use 
the analogue of the semi-direct product in the realm of racks, i.e. 
the hemi-semi-direct product, see Definition
\ref{definition_hs_product}. Actually, this does not work.  
We were unable to combine a rack $R$, an $R$-module $X$ and an equivariant
map $p:X\to R$ into a rack structure which gives even a pre-category in the category of racks.
However, this can be done in some special cases, such as when $p$ has trivial image $\{1\}$, or if
the rack product is trivial on $R$, or if the rack action of $p(X)$ on $R$ is trivial.

We are, nevertheless, able to do the following: Given an augmented rack or a crossed module of racks, 
one can use the functor ${\rm As}$ as described 
in Section \ref{relation_examples} to associate to it a crossed module of groups. To this, one can apply
the usual cronstruction to obtain a strict $2$-group. Finally, one may use the following proposition
from \cite{CCES}:

\begin{prop}[Carter-Crans-Elhamdadi-Saito \cite{CCES}]
The functor ${\rm Conj}:{\tt Grp}\to{\tt Racks}$ sends strict $2$-groups to strict $2$-racks.
\end{prop}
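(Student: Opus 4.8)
The plan is to deduce this from the general principle that internal categories are preserved by pullback-preserving functors, together with the observation — recorded in Section~\ref{first} — that ${\rm As}:{\tt Racks}\to{\tt Grp}$ is left adjoint to ${\rm Conj}:{\tt Grp}\to{\tt Racks}$, so that ${\rm Conj}$, being a right adjoint, preserves all limits that exist in ${\tt Grp}$. A strict $2$-group is, by definition, a category object in ${\tt Grp}$: groups $G_0$ and $G_1$, group homomorphisms $s,t:G_1\to G_0$, $i:G_0\to G_1$ and $\circ:G_1\times_{G_0}G_1\to G_1$ satisfying the category axioms, the latter being commutative diagrams assembled from $s,t,i,\circ$ and the canonical maps into the iterated pullbacks $G_1\times_{G_0}G_1$ and $G_1\times_{G_0}G_1\times_{G_0}G_1$. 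All these pullbacks exist in ${\tt Grp}$ (each is a subgroup of a finite power of $G_1$), so the functor ${\rm Conj}$ may be applied to the entire diagram.

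First I would note that ${\rm Conj}(G_k)$ is a pointed rack with basepoint the group unit, since $1\lhd g=g^{-1}1g=1$ and $g\lhd 1=1^{-1}g1=g$, and that group homomorphisms are morphisms of pointed racks; hence $R_0:={\rm Conj}(G_0)$, $R_1:={\rm Conj}(G_1)$ together with ${\rm Conj}(s),{\rm Conj}(t),{\rm Conj}(i)$ are candidate structure maps of the appropriate kind. The essential point is that ${\rm Conj}$ carries $G_1\times_{G_0}G_1$ to the pullback $R_1\times_{R_0}R_1$ formed in ${\tt Racks}$: this is limit-preservation for the right adjoint ${\rm Conj}$, but one may equally check it by hand, both racks having underlying set $\{(g,f):s(g)=t(f)\}$ and the conjugation rack structure on the subgroup $G_1\times_{G_0}G_1\subseteq G_1\times G_1$ coinciding with the subrack structure inherited from ${\rm Conj}(G_1)\times{\rm Conj}(G_1)={\rm Conj}(G_1\times G_1)$. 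Consequently ${\rm Conj}(\circ)$ is a bona fide rack morphism $R_1\times_{R_0}R_1\to R_1$; unwinding this, because $\circ$ is a group homomorphism one has
\[(g\circ f)\lhd(g'\circ f')=(g'\circ f')^{-1}(g\circ f)(g'\circ f')=\big((g')^{-1}gg'\big)\circ\big((f')^{-1}ff'\big)=(g\lhd g')\circ(f\lhd f'),\]
which is precisely the middle four exchange property saying $\circ$ respects the conjugation rack operation (and $(g\lhd g',f\lhd f')$ again lies in $R_1\times_{R_0}R_1$ since $s(g\lhd g')=s(g)\lhd s(g')=t(f)\lhd t(f')=t(f\lhd f')$).

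Finally, since a functor preserves commutative diagrams, applying ${\rm Conj}$ to the category axioms for the strict $2$-group produces the category axioms for $(R_0,R_1,{\rm Conj}(s),{\rm Conj}(t),{\rm Conj}(i),{\rm Conj}(\circ))$; thus this data is a category object in ${\tt Racks}$, i.e. a strict $2$-rack, and functoriality of the whole construction is automatic from functoriality of ${\rm Conj}$. I expect the only real obstacle to be the compatibility of ${\rm Conj}$ with the fibered products appearing in the definition of an internal category — everything else is formal once that is settled — and a small point worth flagging is that one must use the description of a category object purely via $s,t,i,\circ$ and pullbacks, so that mere limit-preservation of ${\rm Conj}$ does the job, even though ${\rm Conj}$ interacts badly with group inverses in other respects.
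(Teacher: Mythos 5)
Your argument is correct and complete. Note that the paper itself gives no proof of this proposition (it is quoted from the unpublished correspondence \cite{CCES}), so there is nothing internal to compare against; but your route is the natural one: ${\rm Conj}$ is a right adjoint to ${\rm As}$ (as recorded in Section \ref{first}), hence preserves the pullbacks entering the definition of a category object, and the only substantive verifications are the ones you carry out — that ${\rm Conj}(G)$ is a pointed rack, that the pullback $G_1\times_{G_0}G_1$ is carried to the rack pullback $R_1\times_{R_0}R_1$, and that ${\rm Conj}(\circ)$ is a rack morphism, which is exactly the middle four exchange identity $(g\lhd g')\circ(f\lhd f')=(g\circ f)\lhd(g'\circ f')$ that the paper later exploits in Proposition \ref{MFE}. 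Your hands-on check of the pullback compatibility is a sensible safeguard, since limit-preservation by ${\rm Conj}$ is the one point where an abstract appeal could hide an error; everything else is formal.
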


As in Section \ref{relation_examples}, one can obtain crossed modules of groups by starting with a 
crossed module of racks (applying the functor ${\rm As}$ on both racks) or starting with an augmented
rack (applying ${\rm As}$ only on the $G$-set). Another way would be to regard the augmented rack as
a crossed module of racks - this would result in applying ${\rm As}$ also on the group $G$.  

Yet another option, this time {\it without} using the functor ${\rm As}$, requires more structure: 

\begin{prop}   \label{category_from_augm_rack}
Given an augmented rack $p:X\to G$ such that $X$ is a $G$-module, i.e. an abelian group with a 
linear $G$-action, and $p:X\to G$ is a homomorphism, then the usual semi-direct product construction
from Lemma \ref{semi-direct-product-construction} gives rise to a strict $2$-group.
\end{prop}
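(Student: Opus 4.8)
The plan is to reduce the statement to the classical correspondence between crossed modules of groups and strict $2$-groups recalled above: once I know that $p:X\to G$, together with the given $G$-action on $X$, is a crossed module of groups, Lemma \ref{semi-direct-product-construction} produces the category structure on $X\rtimes G$ (objects $G$, morphisms $X\rtimes G$, source $(x,g)\mapsto g$, target $(x,g)\mapsto p(x)g$, identities $g\mapsto(0,g)$), and the crossed-module axioms are precisely what guarantees that the composition is a group homomorphism, so that this internal category is in fact a strict $2$-group. Thus the whole argument comes down to checking that $p:X\to G$ is a crossed module of groups.

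I would verify the four defining conditions in turn. First, $p$ is a group homomorphism by hypothesis. Second, $G$ acts on the abelian group $X$ by group automorphisms: this is exactly what ``linear $G$-action'' means, each $(\cdot g):X\to X$ being additive and bijective. Third, $p$ is equivariant, i.e. $p(x\cdot g)=g^{-1}p(x)g$ for all $x\in X$, $g\in G$ — but this is nothing other than the augmentation identity that $p:X\to G$ satisfies as an augmented rack, so it comes for free. The only condition that is not immediate is the Peiffer identity $x\cdot p(y)=y^{-1}xy$ for all $x,y\in X$, and this is where I expect the real work to lie.

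Since $X$ is abelian the right-hand side $y^{-1}xy$ collapses to $x$, so the Peiffer identity is equivalent to the assertion that $p(X)$ acts trivially on $X$, equivalently that the rack operation $x\lhd y:=x\cdot p(y)$ that the augmented rack structure puts on $X$ is the trivial one. Applying $p$ and using that $p$ is a homomorphism and that $G$ acts by automorphisms gives $p(x\cdot p(y))=p(y)^{-1}p(x)p(y)$, so $p$ is compatible with this putative triviality; the substance of the extra hypothesis that $X$ is a genuine $G$-module together with $p$ being a homomorphism is what should force $x\cdot p(y)=x$ outright rather than merely modulo $\ker p$. Extracting triviality of the $p(X)$-action from the interaction of the abelian group structure, the $G$-action and the homomorphism $p$ is the main obstacle; everything else is bookkeeping.

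Once the Peiffer identity is in hand, $p:X\to G$ is a crossed module of groups, and I conclude as announced: Lemma \ref{semi-direct-product-construction} gives the category object on $X\rtimes G$, the composite of $(x_2,g_2)$ after $(x_1,g_1)$ (with $p(x_1)g_1=g_2$) being $(x_1+x_2,g_1)$, and the crossed-module axioms make $\circ$ a group homomorphism, so $X\rtimes G$ is a strict $2$-group.
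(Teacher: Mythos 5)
The paper states Proposition \ref{category_from_augm_rack} without proof, so there is nothing of the authors' to compare against; I can only judge your proposal on its own terms. Your strategy is surely the intended one: show that $p:X\to G$ is a crossed module of groups and invoke the standard equivalence with strict $2$-groups, the underlying category being that of Lemma \ref{semi-direct-product-construction} and the interchange law being exactly what the Peiffer identity provides. You verify three of the four crossed-module axioms correctly, and you correctly reduce the fourth, the Peiffer identity, to the statement that $p(X)$ acts trivially on the abelian group $X$. But you never prove that statement --- you only announce that this is ``where the real work lies'' and that the hypotheses ``should force'' it. That is a genuine gap, and it sits at the only non-trivial point of the whole argument.

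Moreover, the gap cannot be closed from the stated hypotheses. Take $G=\Z/2\Z=\{1,\sigma\}$ and $X=\Z/2\Z\times\Z/2\Z$ with generators $a,b$; define $p(a)=\sigma$, $p(b)=1$ (a group homomorphism) and let $\sigma$ act on $X$ by the order-two automorphism $a\mapsto a+b$, $b\mapsto b$. Since $G$ is abelian, the augmentation identity reads $p(x\cdot\sigma)=p(x)$, which holds because $p(a+b)=p(a)$ and $p(b)=p(b)$; so $p:X\to G$ is an augmented rack, $X$ is a $G$-module and $p$ is a homomorphism. Yet $a\cdot p(a)=a+b\neq a$, so the Peiffer identity fails, and since the interchange law for the category of Lemma \ref{semi-direct-product-construction} is equivalent to the Peiffer identity (this is precisely the difference between precrossed and crossed modules of groups), $X\rtimes G$ is not a category object in ${\tt Grp}$. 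So the proposition needs an additional hypothesis, namely that $p(X)$ acts trivially on $X$ --- equivalently, that the rack product $x\lhd y=x\cdot p(y)$ induced by the augmented rack structure is the trivial one, which is exactly the condition under which the two rack structures on $X$ mentioned in the remark following the proposition coincide. With that hypothesis added, your argument closes immediately and is correct; without it, no proof exists.
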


\begin{rem}
One may ask what happens when we start with a crossed module of racks (or an augmented 
rack), associate an augmented rack (or a crossed module of groups) to it, perform the $2$-group construction,
regard it as a strict $2$-rack, 
and then reconstruct a crossed module of racks from it. 

Given a crossed module of racks $\mu:R\to S$, we associate to it the crossed module of groups
$\mu:{\rm As}(R)\to{\rm As}(S)$, which can then be regarded as a strict $2$-group in the usual way.   
Then the standard construction of a crossed module of groups from a strict $2$-group applies here, and gives 
$t|_{\ker(s)}:\ker(s)\to {\rm As}(S)$. It is clear that $\ker(s)={\rm As}(R)$ (as the map $s$ is the 
projection from the semi-direct product ${\rm As}(R)\rtimes{\rm As}(S)$ to ${\rm As}(S)$, see
Lemma \ref{semi-direct-product-construction}). Thus we get back the 
crossed module of groups between the associated groups. In case we started with a crossed module of 
racks $\mu:R\to S$ such that the augmented rack $\mu:R\to{\rm As}(S)$ satisfies 
the hypotheses of Proposition \ref{category_from_augm_rack}, then the crossed module we get from the 
above construction is $\mu:R\to{\rm As}(S)$. 

Observe that on the augmented rack $p:X\to G$ from Proposition \ref{category_from_augm_rack},
there are two rack structures on $X$. One is the trivial rack structure coming from the conjugation
rack with respect to the abelian group structure on $X$, and the other comes from the 
$(x,y)\mapsto x\cdot p(y)$-construction.  
\end{rem}

\section{Crossed modules of racks and trunks} \label{third}

Our main idea to go beyond the constructions from the previous section is to associate to a 
crossed module of racks $\mu:R\to S$ not a category, but a {\it trunk}, see \cite{FRS}.

\subsection{Basic definitions}  

\begin{defi}
A {\bf trunk} is a directed graph $\Gamma$ together with a collection of oriented squares 
\[\xymatrix{ C  \ar[r]^c & D \\
             A \ar[u]^b \ar[r]^a & B \ar[u]_d }\]
called \emph{preferred squares}. 
\end{defi}

In categorical language, the set of objects consists of the vertices of $\Gamma$ and 
the set of morphisms consists of the edges of $\Gamma$.  There are then source and 
target maps for arrow/edge. We notice that we are missing the identity-assigning and 
composition morphisms. Instead of these, however, we have 
preferred squares (commutative square diagrams) which, in some sense, replace the composition.

We now introduce the missing identities:

\begin{defi}
A {\bf pointed trunk} is a trunk equipped with a chosen edge $e_A:A\to A$ for each 
vertex $A$ and the following preferred squares for any given edge $a:A\to B$:
\[\xymatrix{ A  \ar[r]^a & B \\
             A \ar[u]^{e_A} \ar[r]^a & B \ar[u]_{e_B} }\]
The edges $e_A:A\to A$ are called {\bf identities}. 
\end{defi}

We will assume that all our racks are pointed and all our trunks have identities. 
Now, in order to model racks in terms of trunks, we pass to the so-called {\it corner trunks}, see
\cite{FRS} p. 324:

\begin{defi}
A {\bf corner trunk} is a trunk which satisfies the two corner axioms:
\begin{enumerate}
\item[(C1)] Given edges $a:A\to B$ and $b:A\to C$, there are unique edges $a\lhd b:C\to D$ and 
$a\rhd b: B\to D$ such that the following square is preferred
\[\xymatrix{ C  \ar[r]^{a\lhd b} & D \\
             A \ar[u]^{b} \ar[r]^a & B \ar[u]_{a\rhd b} }\]
\item[(C2)] In the following diagram, if the squares $(ABCD)$, $(BDYT)$ and $(CDZT)$ are preferred, then 
then the diagram can be completed, as shown by the dotted lines, such that the squares $(ABXY)$, $(ACXZ)$ 
and $(XYZT)$ are 
preferred. 
\[\xymatrix{   & Z \ar[rr] &  &  T \\
             X \ar@{..>}[ur] \ar@{..>}[rr] & & Y \ar[ur] & \\
               & C \ar[uu] \ar[rr] & & D \ar[uu]^c \\
             A \ar@{..>}[uu] \ar[ur] \ar[rr]^a & & B \ar[uu] \ar[ur]^b &  }\]
\end{enumerate}
\end{defi}

\begin{lem} \label{cornerbinary}
In a corner trunk, the binary operations $\rhd$ and $\lhd$ satisfy: 
\begin{enumerate}
\item $(a\lhd b)\lhd(b\rhd c)\,=\,(a\lhd c)\lhd(b\lhd c)$
\item $(b\rhd c)\rhd(b\rhd a)\,=\,(b\lhd c)\rhd(c\rhd a)$
\item $(b\rhd a)\lhd(b\rhd c)\,=\,(b\lhd c)\rhd(a\lhd c)$
for all arrows $a,b,c$.
\end{enumerate}
\end{lem}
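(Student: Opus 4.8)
The plan is to obtain all three identities from a single ``preferred cube'' built on three edges $a,b,c$ sharing a common source $A$ (which is the only case to treat, since otherwise $a\lhd b$, $a\rhd b$, etc.\ are not even defined). First I would use the corner axiom (C1) three times, on the three pairs of edges out of $A$, to produce the three faces of the cube through $A$: applying (C1) to $(a,b)$ gives a preferred square $(ABCD)$ with top edge $a\lhd b\colon C\to D$ and right edge $a\rhd b\colon B\to D$; applying (C1) to $(a,c)$ gives a preferred square $(ABXY)$ with top edge $a\lhd c\colon X\to Y$ and right edge $a\rhd c\colon B\to Y$, where $X$ is the target of $c$; and applying (C1) to $(b,c)$ gives a preferred square $(ACXZ)$ with top edge $b\lhd c\colon X\to Z$ and right edge $b\rhd c\colon C\to Z$. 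This records, in terms of $\lhd$ and $\rhd$, every edge of the cube except the three pointing at the terminal vertex.

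Next I would close up the cube using (C2). To bring the data into the form required by its hypothesis, I would first apply (C1) to the pair of edges $(a\rhd b,\,a\rhd c)$ out of $B$ to obtain a preferred square $(BDYT)$ (this produces the terminal vertex $T$ together with the edge $D\to T$), and then use the invertibility and uniqueness built into the corner operations to choose the second edge out of $C$ making $(CDZT)$ a preferred square with the \emph{same} terminal vertex $T$ and the \emph{same} edge $D\to T$; one checks this forces that edge to be $b\rhd c$, consistently with the square $(ACXZ)$ already constructed. Axiom (C2) now applies to $(ABCD)$, $(BDYT)$, $(CDZT)$ and supplies the remaining preferred squares $(ABXY)$, $(ACXZ)$, $(XYZT)$; by the uniqueness clause of (C1) these agree with the faces already built, so in particular $(XYZT)$ is the preferred square with bottom edge $a\lhd c$ and left edge $b\lhd c$.

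Finally I would read the identities off the cube: each of the three edges $D\to T$, $Y\to T$, $Z\to T$ lies in exactly two faces, and expressing it via (C1) from each of those two preferred squares and equating gives three relations. The edge $D\to T$ (in $(BDYT)$ and $(CDZT)$) yields $(a\rhd b)\rhd(a\rhd c)=(a\lhd b)\rhd(b\rhd c)$; the edge $Z\to T$ (in $(CDZT)$ and $(XYZT)$) yields $(a\lhd b)\lhd(b\rhd c)=(a\lhd c)\lhd(b\lhd c)$; and the edge $Y\to T$ (in $(BDYT)$ and $(XYZT)$) yields $(a\rhd b)\lhd(a\rhd c)=(a\lhd c)\rhd(b\lhd c)$. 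The second of these is identity~(1) verbatim, and the first and the third become identities~(2) and~(3) after the relabellings $(a,b,c)\mapsto(b,c,a)$ and $(a,b,c)\mapsto(b,a,c)$, which are harmless since $a,b,c$ were arbitrary.

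I expect the main obstacle to be the middle step: genuinely verifying the hypotheses of (C2) rather than only invoking its conclusion — in particular arranging that the squares $(BDYT)$ and $(CDZT)$ really share the single edge $D\to T$, which is exactly where an invertibility property of the corner operations is needed — together with the purely clerical but error-prone bookkeeping of which edge in each of the six faces is the $\lhd$-edge and which is the $\rhd$-edge, since a slip there permutes or corrupts the three identities.
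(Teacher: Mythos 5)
The paper states this lemma without proof---it is quoted from Fenn--Rourke--Sanderson \cite{FRS}---so there is no in-paper argument to compare against and I can only assess your proposal on its own terms. Your overall strategy (close up the preferred cube on three edges with common source $A$, then read each identity off an edge shared by two faces using the uniqueness clause of (C1)) is the standard one, and your final bookkeeping is accurate: the edge $Z\to T$ gives identity (1) verbatim, and the edges $D\to T$ and $Y\to T$ give identities (2) and (3) after exactly the relabellings you indicate.

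The gap is the middle step, and you have correctly located it, but the resolution you sketch is circular rather than merely fiddly. You propose to choose the left edge $\gamma:C\to Z'$ of $(CDZT)$ by solving $(a\lhd b)\rhd\gamma=e$ for the edge $e:D\to T$ produced by $(BDYT)$, and then assert that ``one checks this forces that edge to be $b\rhd c$.'' But $\gamma=b\rhd c$ is precisely the assertion $(a\lhd b)\rhd(b\rhd c)=(a\rhd b)\rhd(a\rhd c)$, i.e.\ identity (2) up to relabelling, so it cannot be ``checked'' before (C2) has been applied. The repair is to leave $\gamma$ unnamed, apply (C2) to $(ABCD)$, $(BDYT)$, $(CDZ'T)$ to obtain a vertex $X'$ with preferred faces $(ABX'Y)$, $(ACX'Z')$, $(X'YZ'T)$, and only afterwards argue: $(ABX'Y)$ has bottom $a$ and right edge $a\rhd c$, so by injectivity of $\gamma\mapsto a\rhd\gamma$ its left edge is $c$ and $X'=X$; then (C1)-uniqueness applied to $(ACXZ')$ forces $Z'=Z$ and $\gamma=b\rhd c$; the three identities then all drop out of the shared edges. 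Note, moreover, that the two invertibility statements this uses---surjectivity of $\gamma\mapsto(a\lhd b)\rhd\gamma$ to produce $\gamma$ at all, and injectivity of $\gamma\mapsto a\rhd\gamma$ to identify $X'$ with $X$---are not consequences of (C1)--(C2) as stated in this paper; they belong to the fuller definition in \cite{FRS} (a preferred square is determined by any two adjacent edges), consistent with the paper's own remark after the lemma that the operations of a birack are bijective. Without importing that hypothesis explicitly, the cube does not close and the argument is incomplete.
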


In fact, a set $X$ with two operations $\rhd$ and $\lhd$ (which are bijective 
and) satisfy the above three identities, is called a {\it birack}. Biracks 
also serve to construct link invariants, see e.g. \cite{CEGN}. 

The case which is most interesting to us is when one of the two 
operations $\rhd$ or $\lhd$ is trivial. 
Then, by Lemma \ref{cornerbinary} above, the other operation 
satisfies a (left or right) rack identity. This means that these types
of corner trunks codify racks. The notion of a corner trunk can be pointed in an obvious way. 

\begin{exa}
Any rack $(R,\lhd)$ gives rise to a corner trunk ${\mathcal T}(R)$, 
called the {\it rack trunk}, see \cite{FRS} p. 327.
The trunk ${\mathcal T}(R)$ consists of a single vertex $*$, while the preferred squares are given by the 
rack operation
\[\xymatrix{ {*}  \ar[r]^{a\lhd b} & {*} \\
             {*} \ar[u]^{b} \ar[r]^a & {*.} \ar[u]_{b}  }   \]
\end{exa}

\begin{exa}  \label{action_trunk}
{\it The action rack trunk}, cf \cite{FRS} p. 329. Let $X$ be an $R$-set where $R$ is a rack. 
From this data, we construct a trunk ${\mathcal T}_X(R)$. Namely, we take $X$ as the set of vertices 
and edges of the form $x\stackrel{r}{\to}x\cdot r$ for $r\in R$ and $x\in X$. The preferred squares
are then of the form:
\[\xymatrix{ x\cdot r'  \ar[r]^{r\lhd r'} & (x\cdot r)\cdot r' \\
             x  \ar[u]^{r'} \ar[r]^r & x\cdot r \ar[u]^{r'} }\]
for all $r,r'\in R$ and $x\in X$.
Observe that this fits together in the upper right hand corner because for our right action we have
\[(x\cdot r)\cdot r'\,=\,(x\cdot r')\cdot (r\lhd r').\]
As in \cite{FRS}, we see that ${\mathcal T}_X(R)$ is indeed a corner trunk. From the categorical point 
of view, we will denoted morphisms $x\stackrel{r}{\to}x\cdot r$ as pairs $(x,r)$, and then we have
source and target maps $\xymatrix{X\times R\ar@<2pt>[r]^{s}\ar@<-2pt>[r]_{t} & X}$ given by
$s(x,r)=x$ and $t(x,r)=x\cdot r$. As already remarked in {\it loc. cit.}, the operation expressed 
by the preferred squares can be expressed as:
\[(x,r)\lhd(x,r')\,=\,(x\cdot r',r\lhd r'),\,\,\,\,\,(x,r')\rhd(x, r)\,=\,(x\cdot r,r').\]
The first formula ressembles the hemi-semi-direct product, see Definition
\ref{definition_hs_product}, but with a composability condition 
(the first components have to be equal to $x$). 

In the special case where a rack $(R,\lhd)$ acts on itself by $x\mapsto x\lhd y$, the action rack trunk
 ${\mathcal T}_R(R)$ is called {\it extended rack trunk} in \cite{FRS} on p. 329. 
\end{exa}


\subsection{From crossed modules of racks to trunks}

Given a crossed module of racks $\mu:R\to S$, we have, in particular, an $S$-set $R$, and we can 
thus apply the construction from Example \ref{action_trunk} to obtain a corner trunk. 
This is, in our opinion, the correct ``categorical object" associated to a crossed 
module of racks, as it is a `sort of trunk' in the 
category of racks.  Thus, we introduce:

\begin{defi}
A {\bf trunkified rack} consists of a trunk map between an action rack trunk (for $R$ acting on $X$)
and the extended rack trunk of $R$. 
\end{defi}

\begin{prop}  \label{correspondence_cr_mod_racks_trunks}
There is a one-to-one correspondence between crossed modules of racks and trunkified racks. 
\end{prop}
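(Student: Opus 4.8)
The plan is to exhibit mutually inverse constructions between crossed modules of racks and trunkified racks, using Proposition \ref{cr_mod_vs_gen_augm_rack} to view a crossed module of racks as a triple $(R, X, p)$ with $R$ a rack, $X$ an $R$-module, and $p : X \to R$ an equivariant map, and then unwinding exactly what a trunk map between the action rack trunk ${\mathcal T}_X(R)$ and the extended rack trunk ${\mathcal T}_R(R)$ amounts to. First I would fix notation for a trunk map: it is a graph morphism sending vertices of ${\mathcal T}_X(R)$ (i.e.\ elements of $X$) to vertices of ${\mathcal T}_R(R)$ (i.e.\ elements of $R$) and edges to edges, carrying preferred squares to preferred squares. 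An edge of ${\mathcal T}_X(R)$ has the form $x \xrightarrow{r} x\cdot r$ for $r\in R$; an edge of ${\mathcal T}_R(R)$ has the form $a \xrightarrow{b} a\lhd b$ for $b\in R$. So a trunk map must consist of a vertex map $p : X \to R$ together with a rule assigning to each edge-label $r\in R$ (at source $x$) an edge-label in $R$; requiring compatibility with sources and targets forces $p(x\cdot r) = p(x)\lhd (\text{label})$, and the most natural —and, I expect, essentially forced— choice is that the edge-label map is just the identity on $R$, so that the trunk map is governed entirely by $p$ with the constraint $p(x\cdot r) = p(x)\lhd r$, which is precisely the generalized augmentation identity, i.e.\ equivariance of $p$.

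Next I would check that the preferred-square condition for the trunk map imposes nothing further. A preferred square of ${\mathcal T}_X(R)$ is
\[
\xymatrix{ x\cdot r'  \ar[r]^{r\lhd r'} & (x\cdot r)\cdot r' \\
           x  \ar[u]^{r'} \ar[r]^r & x\cdot r \ar[u]^{r'} }
\]
and applying the candidate trunk map (identity on edge-labels, $p$ on vertices) sends it to
\[
\xymatrix{ p(x)\lhd r'  \ar[r]^{r\lhd r'} & (p(x)\lhd r)\lhd r' \\
           p(x)  \ar[u]^{r'} \ar[r]^r & p(x)\lhd r \ar[u]^{r'} }
\]
using equivariance of $p$ to identify $p(x\cdot r) = p(x)\lhd r$ and $p(x\cdot r') = p(x)\lhd r'$ and $p((x\cdot r)\cdot r') = (p(x)\lhd r)\lhd r'$; this is exactly a preferred square of the extended rack trunk ${\mathcal T}_R(R)$ by the rack identity. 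Hence every equivariant map $p : X\to R$ between an $R$-set $X$ and the rack $R$ yields a trunk map, i.e.\ a trunkified rack; conversely a trunkified rack, unwound as above, produces such a $p$. I would then invoke Proposition \ref{cr_mod_vs_gen_augm_rack} to pass between the triple $(R,X,p)$ and the genuine crossed module of racks, and check the two passages are mutually inverse, which is immediate since both the trunk-map side and the crossed-module side are completely determined by the same data $(R, X, p)$.

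The step I expect to be the main obstacle is pinning down precisely \emph{which} trunk maps we allow, i.e.\ making rigorous the claim that the edge-label component of a trunk map ${\mathcal T}_X(R)\to{\mathcal T}_R(R)$ must be (or may without loss be taken to be) the identity on $R$. One has to be careful: a priori a trunk morphism could relabel edges in an $x$-dependent way, or the two trunks need not even have their edge sets indexed by the same rack unless one builds that into the definition of ``trunkified rack.'' I would address this by spelling out in the definition that a trunkified rack uses the \emph{same} rack $R$ for both the acting rack in ${\mathcal T}_X(R)$ and the base of ${\mathcal T}_R(R)$, and that the trunk map is required to be the identity on the $R$-labels of edges (equivalently, to commute with the ``forget the vertex, keep the label'' projections to $R$) — this is the natural reading of the preceding discussion, and with it the correspondence becomes the bookkeeping identity sketched above. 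The remaining verifications (that equivariance is exactly what makes the vertex map plus identity-on-labels a bona fide trunk map preserving identities and preferred squares, and that this is inverse to the Example \ref{gen_augm_rack} construction) are the routine calculations I would not grind through here, beyond noting they reduce to the rack identity and the generalized augmentation identity already used in Example \ref{gen_augm_rack}.
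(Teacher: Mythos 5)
Your proposal is correct and follows essentially the same route as the paper: reduce to generalized augmented racks via Proposition \ref{cr_mod_vs_gen_augm_rack} and then identify the data $(R,X,p)$ with a trunk map ${\mathcal T}_X(R)\to{\mathcal T}_R(R)$. The paper's own proof is terser and does not spell out the point you rightly flag --- that the edge-label component of the trunk map should be (taken to be) the identity on $R$ for the correspondence to be a genuine bijection --- so your extra care there is a clarification of the same argument rather than a different one.
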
   
 
\begin{proof}
We first observe that by Proposition \ref{cr_mod_vs_gen_augm_rack}, it is enough to work
with generalized augmented racks instead of crossed modules of racks. A generalized augmented rack
consists of a rack $R$, an $R$-module $X$ and an equivariant map $p:X\to R$.
We can associate to these the extended rack trunk 
${\mathcal T}_R(R)$, the action trunk ${\mathcal T}_X(R)$ and the induced trunk map 
$p:{\mathcal T}_X(R)\to{\mathcal T}_R(R)$ (or $p:{\mathcal T}_X(R)\to{\mathcal T}(R)$). 

In the other direction, we recover from a trunk map ${\mathcal T}_X(R)\to{\mathcal T}_R(R)$
the rack $R$ and the rack action of $R$ on $X$. The trunk map gives an equivariant map, and 
therefore we recover our crossed module. \end{proof}    

\begin{rem}
One can, of course, introduce alternative versions of trunkified racks. For example, one 
could choose to define a
trunkified rack as a trunk in the category of racks. 

In order to associate to a crossed module of racks such a trunkified rack, one idea would be to take
the image of the above trunk map appearing in the proof of Proposition 
\ref{correspondence_cr_mod_racks_trunks}. Unfortunately, we were unable to show that this 
gives a trunk in the category of racks, and so this remains a question for further investigation.
\end{rem}

We conclude this section with the following scheme which may enable us to eventually associate a strict 
$2$-rack
to a crossed module of racks. Let $p:X\to R$ be a crossed module of racks. Denote by
${\rm cat}:{\tt Trunks}\to{\tt Cats}$ the functor from the category of (small) trunks to the 
category of (small) categories that associates to a trunk
${\mathcal T}$ the category whose objects consist of the same set of vertices/objects as the 
trunk, but whose set of morphisms 
(between two fixed objects) are generated by the set of arrows of ${\mathcal T}$ (between these objects)
such that the preferred squares become commutative diagrams.   

We then form the trunk map $p:{\mathcal T}_X(R)\to{\mathcal T}_R(R)$ and take the image trunk 
${\rm im}(p)$ and show that this is a trunk in the category of racks.  We then demonstrate 
that, in general, the functor ${\rm cat}:{\tt Trunks}\to{\tt Cats}$ sends trunks in the 
category of racks to categories in the category of racks (i.e. that ${\rm Cat}$ can be enriched in 
racks).  Finally, the image ${{\rm cat}(\rm im}(p))$ is then the categorical rack associated to $p:X\to R$.


\section{Applications} \label{apps}

We now show that some of these categorical objects that we have associated to crossed modules of 
racks have topological/geometrical applications. 

\subsection{The rack space of a crossed module of racks}

In the paper \cite{FRS}, the authors associate to a rack $X$ a rack space $BX$ by taking the geometric
realization of the cubical nerve $NX$ of the trunk ${\mathcal T}(X)$ associated to $X$. 
They also show that for an
action rack trunk ${\mathcal T}_Y(X)$ (with the rack $X$ acting on $Y$), 
the canonical map $Y\to\{*\}$ induces a
trunk map ${\mathcal T}_Y(X)\to{\mathcal T}(X)$, which gives rise to a covering 
$B_YX\to BX$, see Theorem 3.7 in \cite{FRS}. 

Now starting with a crossed module of racks $p:X\to R$, we have
first of all a trunk map
${\mathcal T}_X(R)\to{\mathcal T}(R)$
inducing the covering $B_XR\to BR$. Then we also have a trunk map
\[p:{\mathcal T}_X(R)\to{\mathcal T}_R(R),\]
where ${\mathcal T}_R(R)$ is the extended rack trunk. 
This map also induces a map of $\Box$-sets between the cubical nerves
\[p:N_X(R)\to N_R(R),\]
and finally a map between the
corresponding rack spaces $B_X(R)\to B_R(R)$, see \cite{FRS} p. 331. 

Both $\Box$-sets $N_X(R)$ and $N_R(R)$ are in fact $\Box$-coverings of $NR$. We will show that $p$ 
is a covering:

\begin{prop}   \label{cr_mod_racks_to_covering} 
Suppose that $B_R(R)$ is arcwise connected and locally arcwise connected. 
Then the geometric realization of the natural map $p:N_X(R)\to N_R(R)$ is a covering 
of topological spaces. 
\end{prop}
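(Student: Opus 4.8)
The plan is to exhibit $p : N_X(R) \to N_R(R)$ as a quotient by a free, properly discontinuous action, so that the geometric realization becomes a genuine covering map. First I would recall from \cite{FRS} that both $N_X(R)$ and $N_R(R)$ sit over $NR$ as $\Box$-coverings, the classifying data being the $R$-set $X$ (resp.\ the $R$-set $R$ acting on itself by $\lhd$). Concretely, an $n$-cube of $N_X(R)$ is a tuple $(x; r_1, \dots, r_n)$ with $x \in X$, $r_i \in R$, and the face maps act on $x$ through the $R$-action; similarly an $n$-cube of $N_R(R)$ is $(r_0; r_1, \dots, r_n)$. The map $p$ sends $(x; r_1, \dots, r_n)$ to $(p(x); r_1, \dots, r_n)$, using the equivariance $p(x \cdot r) = p(x) \lhd r$ to see this is a $\Box$-map over $NR$. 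So on each fibre over a cube of $NR$, $p$ is just the map of sets $X \to R$, $x \mapsto p(x)$, commuting with the $R$-action.

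Next I would analyze the fibres of the realized map. Over a point of $B_R(R)$ lying in the cell indexed by $(r_0; r_1, \dots, r_n)$, the fibre of $|p|$ is the set $p^{-1}(r_0) \subset X$ (more precisely, the colimit identification on cubical nerves forces the fibre over any point to be identified, via the action maps, with $p^{-1}(r_0)$ for a representative $r_0$, and since $B_R(R)$ is assumed arcwise connected all these fibre-sets have the same cardinality — this is where arcwise connectedness of the base is used). Thus $|p|$ has constant fibre $F := p^{-1}(r_0)$, viewed as a discrete set. The structure here is that of the pullback, along $B_R(R) \to BR$, of the covering $B_X R \to BR$ classified by the $R$-set $X$; equivalently, $B_X R \to B_R R$ is itself the covering classified by restricting the $R$-set $X$ along the quotient identifying $B_R R$'s monodromy. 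The key structural input is that $N_X(R) \to N_R(R)$ is a relative cubical covering: each face and each "preferred square" lift uniquely once the starting vertex is specified, because the $R$-action on $X$ by bijections makes the edge $x \stackrel{r}{\to} x\cdot r$ the unique lift of $p(x) \stackrel{r}{\to} p(x)\lhd r$ starting at $x$.

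Then I would invoke the standard covering-space criterion. Since $B_R(R)$ is arcwise connected and locally arcwise connected (the hypotheses), it suffices to produce, around each point, an evenly-covered arcwise-connected neighborhood. For this I use the cell structure: each open cell of $B_R(R)$ is contractible, and the preimage under $|p|$ of a small arcwise-connected neighborhood of a point — built by thickening the cell it lies in using the cubical structure, exactly as in the proof of Theorem 3.7 of \cite{FRS} — decomposes as a disjoint union indexed by $F$ of homeomorphic copies, because the cube-by-cube unique lifting above assembles to a homeomorphism $U \times F \cong |p|^{-1}(U)$. Local arcwise connectedness of the base guarantees such $U$ can be chosen arcwise connected so that the sheets are genuinely the connected components of $|p|^{-1}(U)$. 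Hence $|p|$ is a covering.

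The main obstacle I anticipate is the fibre-identification step: showing that the set-theoretic fibres $p^{-1}(r_0)$ over cubes in different cells of $NR$ really do glue into a \emph{locally constant} sheaf on $B_R(R)$, i.e.\ that there is no monodromy obstruction preventing a product neighborhood. This is precisely handled by the unique-lifting property of edges and preferred squares (the $R$-action by bijections), together with the assumed connectedness of $B_R(R)$ to rule out fibres of differing size; but making the gluing fully rigorous requires carefully tracking how the cubical face and degeneracy identifications in the geometric realization interact with the $R$-action on $X$, which is the routine-but-fiddly part of the argument, and is where I would mirror \cite{FRS} most closely.
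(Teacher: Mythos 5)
Your proof is correct, but it takes a genuinely different route from the paper's. The paper stays entirely inside classical covering-space theory over the common base $BR$: both $B_XR\to BR$ and $B_RR\to BR$ are coverings by Theorem 3.7 of \cite{FRS}, so by the lifting theorem the map $p$ between them is itself a covering as soon as $\pi_1(B_XR,x)\subset\pi_1(B_RR,p(x))$ as subgroups of $\pi_1(BR,*)={\rm As}(R)$; by Proposition 4.5 of \cite{FRS} these fundamental groups are the stabilizers ${\rm Stab}_x$ and ${\rm Stab}_{p(x)}$ in ${\rm As}(R)$, and the inclusion ${\rm Stab}_x\subset{\rm Stab}_{p(x)}$ is immediate from the equivariance $p(x\cdot r)=p(x)\lhd r$. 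You instead argue directly on the cubical nerves: you identify the fibre over a cube $(r_0;r_1,\dots,r_n)$ as $p^{-1}(r_0)$, use bijectivity of the $R$-action on $X$ together with equivariance of $p$ to get unique cube lifting and local constancy of the fibres, and then assemble evenly-covered neighbourhoods as in the proof of Theorem 3.7 of \cite{FRS}. Both arguments are sound. Yours is more self-contained and makes the fibres and local trivializations explicit; your remark that $B_XR\to B_RR$ is the map of coverings over $BR$ induced by the ${\rm As}(R)$-equivariant map $X\to R$ in fact yields the covering property formally, fibre by fibre, with connectedness of $B_RR$ used only to make the fibre cardinality constant. The cost is the deferred, admittedly fiddly, verification that the cube-wise trivializations glue in the geometric realization. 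The paper's argument is a short reduction to the algebraic inclusion of stabilizers, at the price of importing the computation $\pi_1(B_YX,y)={\rm Stab}_y$ and the lifting machinery (which, strictly speaking, also requires connectivity hypotheses on the total space $B_XR$, not only on $B_RR$ --- a point your more direct argument sidesteps).
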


\begin{proof} 
The lifting theorem for a continuous map into the base of a covering 
implies that we only need to show that $\pi_1(B_XR,x)\,\subset\,\pi_1(B_RR,p(x))$. 

Now recall the fact that the fundamental group $\pi_1(B_YX,y)$ is just the stabilizer of $y$,
i.e.
\[\pi_1(B_YX,y)\,=\,{\rm Stab}_y\,\subset\,{\rm As}(X),\]
see Proposition 4.5 in \cite{FRS}. In particular, we have $\pi_1(BX,*)\,=\,{\rm As}(X)$. Thus in order 
to show the claim, we just need to show that the subgroups $\pi_1(B_XR,x)$ and $\pi_1(B_RR,p(x))$
satisfy
\[\pi_1(B_XR,x)\,\subset\,\pi_1(B_RR,p(x))\] 
as subgroups of ${\rm As}(R)$. This follows from the inclusion of the corresponding stabilizers
${\rm Stab}_x\,\subset\,{\rm Stab}_{p(x)}$.\end{proof}

In this sense, we can associate to each crossed module of racks a covering of rack spaces. We anticipate 
that this will 
serve to enhance link invariants.

\begin{rem}
It would be interesting to know what kind of local property these rack spaces have. In order
to have a nice theory of covering spaces, one would like to work with topological spaces 
which are, for example, arcwise connected and locally simply connected.
\end{rem}

\subsection{Crossed modules of racks from link coverings}

Here we are doing in some sense the inverse construction with respect to what we did in the previous section.
Namely, given a covering (and a link), we associate to it a crossed module of racks. 

We now consider an inverse construction, in some sense:  Suppose given augmented racks $p_i:X_i\to G_i$ 
for $i=0,1$ and a 
commutative diagram
\[\xymatrix{ X_1  \ar[r]^{p_1} \ar[d]^{\alpha} & G_1 \ar[d]^{\beta}  \\
             X_0 \ar[r]^{p_0} & G_0  }\]
where $\beta$ is a group homomorphism and $\alpha$ is a morphism of group-sets over $\beta$, i.e.
for all $x\in X_1$ and all $g\in G_1$, we have
\[\alpha(x*g)\,=\,\alpha(x)\cdot\beta(g).\]
Here we have written the right actions $x*g$ for the action of $G_1$ on $X_1$, and $y\cdot h$ 
for the action of $G_0$ on $X_0$. We then ask:  Under which conditions is $\alpha:X_1\to X_0$ a 
crossed module of racks ?

\begin{prop}  \label{two_augmented_racks}
Suppose that there is a right action of $G_0$ on $X_1$, denoted $(x,g)\mapsto x\circ g$, such that
\begin{enumerate}
\item $x*g\,=\,x\circ\beta(g)$ for all $g\in G_1$ and all $x\in X_1$,
\item $\alpha$ is equivariant, i.e. 
$\alpha(x\circ g)\,=\,\alpha(x)\cdot g,$ for all $g\in G_0$ and all $x\in X_1$, and
\item $(y*p_1(x))\circ g\,=\,(y\circ g)*p_1(x\circ g)$ for all $g\in G_0$ and all $x,y\in X_1$.
\end{enumerate}
Then $\alpha:X_1\to X_0$ is a crossed module of racks for the augmented racks 
$p_i:X_i\to G_i$ for $i=0,1$.
\end{prop}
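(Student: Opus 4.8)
The plan is to verify the axioms of Definition \ref{definition_cr_mod} (equivalently, via Proposition \ref{cr_mod_vs_gen_augm_rack}, those of a generalized augmented rack as in Example \ref{gen_augm_rack}) directly from the three hypotheses. First I would recall that by the discussion following the definition of an augmented rack, each $X_i$ carries a rack product given by $x\lhd x'\,:=\,x*p_1(x')$ for $i=1$ and $y\lhd y'\,:=\,y\cdot p_0(y')$ for $i=0$, and with these products the maps $p_i$ become equivariant morphisms of racks. The commutativity of the square together with $\alpha(x*g)\,=\,\alpha(x)\cdot\beta(g)$ then shows that $\alpha$ is a morphism of racks: $\alpha(x\lhd x')\,=\,\alpha(x*p_1(x'))\,=\,\alpha(x)\cdot\beta(p_1(x'))\,=\,\alpha(x)\cdot p_0(\alpha(x'))\,=\,\alpha(x)\lhd\alpha(x')$.

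Next I would use hypothesis (1), $x*g\,=\,x\circ\beta(g)$, to identify the rack product on $X_1$ in terms of the $G_0$-action $\circ$: namely $x\lhd x'\,=\,x*p_1(x')\,=\,x\circ\beta(p_1(x'))\,=\,x\circ p_0(\alpha(x'))$. This exhibits $X_1$, via $\circ$, as an $X_0$-module, at least in the following sense — I would want $X_1$ to be acted on by the \emph{rack} $X_0$, with action $x\bullet y\,:=\,x\circ p_0(y)$ for $y\in X_0$. That this is a rack action amounts to checking $(x\bullet y)\bullet y'\,=\,(x\bullet y')\bullet(y\lhd y')$, which follows because $\circ$ is a group action of $G_0$ and $p_0$ is an equivariant rack morphism (so $p_0(y\lhd y')=p_0(y')^{-1}p_0(y)p_0(y')$, and the group-action identity $ (x\circ g)\circ g' = x\circ(gg')$ collapses the two sides appropriately). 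Then hypothesis (2) says precisely that $\alpha$ is equivariant with respect to this $X_0$-action and the self-action of $X_0$: $\alpha(x\bullet y)\,=\,\alpha(x\circ p_0(y))\,=\,\alpha(x)\cdot p_0(y)\,=\,\alpha(x)\lhd y$. So the generalized augmentation identity holds.

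It then remains to check that $X_0$ acts on the rack $X_1$ \emph{by automorphisms}, i.e. $(x\lhd x')\bullet y\,=\,(x\bullet y)\lhd(x'\bullet y)$ for all $x,x'\in X_1$, $y\in X_0$. Unwinding, the left side is $(x*p_1(x'))\circ p_0(y)$ and the right side is $(x\circ p_0(y))*p_1(x'\circ p_0(y))$, so this is \emph{exactly} hypothesis (3) with $g\,=\,p_0(y)$. Finally, the Peiffer identity $x\bullet\alpha(x')\,=\,x\lhd x'$ is automatic: $x\bullet\alpha(x')\,=\,x\circ p_0(\alpha(x'))\,=\,x\circ\beta(p_1(x'))\,=\,x*p_1(x')\,=\,x\lhd x'$, using hypothesis (1) and commutativity of the square. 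Assembling these, $\alpha:X_1\to X_0$ together with the $X_0$-action $\bullet$ is a generalized augmented rack, hence a crossed module of racks by Proposition \ref{cr_mod_vs_gen_augm_rack}.

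The only genuinely non-formal point is the rack-action verification $(x\bullet y)\bullet y'=(x\bullet y')\bullet(y\lhd y')$, which I expect to be the main obstacle in the sense that it is where one must combine the group-action property of $\circ$ with the augmentation identity for $p_0$ correctly; everything else is a direct substitution using hypotheses (1)--(3) and the standard fact that an augmented rack yields an equivariant rack morphism. One should also take care that the composability issue visible in Example \ref{action_trunk} does not intrude here — it does not, because the $\circ$-action of the full group $G_0$ on $X_1$ is genuinely global, which is exactly why hypothesis (3) is stated for all $g\in G_0$ rather than only for those in the image of $p_1$.
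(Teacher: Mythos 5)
Your proof is correct and follows essentially the same route as the paper: equip $X_0$ and $X_1$ with the rack products coming from the augmentations, check that $\alpha$ is a rack morphism via the commutative square, pull the $G_0$-action back along $p_0$ to an $X_0$-action, and match hypotheses (3), (2), (1) to the automorphism, equivariance, and Peiffer conditions respectively. The only addition is your explicit verification that $x\bullet y:=x\circ p_0(y)$ satisfies the rack-action axiom, a step the paper asserts without proof; your computation of it is correct.
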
 

\begin{proof} The sets $X_0$ and $X_1$ become racks via the usual definitions: $x\lhd y\,:=\,x*p_1(y)$ 
for $x,y\in X_1$ and $x\lhd y\,:=\,x\cdot p_0(y)$ for $x,y\in X_0$. 
The map $\alpha$ is then a morphism of racks since
\begin{eqnarray*}
\alpha(x\lhd y)&=&\alpha(x* p_1(y))   \\
&=&\alpha(x)\cdot\beta(p_1(y))          \\
&=&\alpha(x)\cdot p_0(\alpha(y))        \\
&=&\alpha(x)\lhd\alpha(y)
\end{eqnarray*}

Using the map $p_0$, the action $\circ$ of $G_0$ on $X_1$ induces an action of $X_0$ on $X_1$.  

Property {\it 3} clearly translates into the fact that the action $\circ$ is by rack autmorphisms:   
\[(y\lhd x)\circ g\,=\,(y*p_1(x))\circ g\,=\,(y\circ g)*p_1(x\circ g)\,=\,(y\circ g)\lhd(x\circ g).\]

Property {\it 2} is the equivariance of map $\alpha$:
\[\alpha(x\circ p_0(y))\,=\,\alpha(x)\cdot p_0(y)\,=\,\alpha(x)\lhd y,\] and

Peiffer's identity is satisfied thanks to Property {\it 1}
\[x\circ p_0(\alpha(y))\,=\,x\circ \beta(p_1(y))\,=\,x*p_1(y)\,=\,x\lhd y.\]\end{proof}  

We can use Proposition  \ref{two_augmented_racks} to define a crossed modules of racks in a 
geometrical/topological
context. Before we come to our construction, we recall the fundamental rack of a link from Fenn and Rourke
\cite{FenRou} p. 358.

A {\it link} is a codimension two embedding $L:M\subset Q$ of manifolds. We will assume that $M$ is 
non-empty, that $Q$ is connected (with empty boundary) and that $M$ is transversely oriented in $Q$.
In other words, we assume that each normal disc to $M$ in $Q$ has an orientation which is locally
and globally coherent. 

The link is called {\it framed} if there is a cross-section $\lambda:M\to\partial N(M)$ of the 
normal disk bundle. Denote by $M^+$ the image of $M$ under $\lambda$. In the following, we will only
consider framed links.   

Then, Fenn and Rourke 
associate to $L\subset Q$ an augmented rack (called the {\it fundamental rack} of the link
$L$) which is the space $\Gamma$ of homotopy classes of paths in 
$Q_0:={\rm closure}(Q\setminus N(L))$ of $L$, from a point in $M^+$ to some base point $q_0$. 
During the homotopy, the final point of the path at $q_0$ is kept fixed and the initial
point is allowed to wander at will on $M^+$.

The set $\Gamma$ has an action of the fundamental group $\pi_1(Q_0,q_0)$ defined as follows: 
let $\gamma$ 
be a loop in $Q_0$ based at $q_0$ representing an element $g\in\pi_1(Q_0)$. 
If $\alpha\in\Gamma$ is represented by the 
path $\alpha$, define $a\cdot g$ to be the class of the composite path $\alpha\circ\gamma$.

We can use this action to define a rack structure on $\Gamma$. Let $p\in M^+$ be a point on the 
framing image. Then $p$ lies on a unique meridian circle of the normal disc bundle. Let $m_p$ be the
loop based at $p$ which follows the meridian around in a positive direction. Let $a,b\in\Gamma$ be
represented by paths $\alpha,\beta$ respectively. Let $\partial(b)$ be the element of $\pi_1(Q_0,q_0)$
determined by the homotopy class of the loop $\beta^{-1}\circ m_{\beta(0)}\circ\beta$. The 
\emph{fundamental rack of the framed link $L$} is defined to be the set $\Gamma=\Gamma(L)$ with the operation
\[a\lhd b\,:=\,a\cdot\partial(b):\,=\,[\alpha\circ\beta^{-1}\circ m_{\beta(0)}\circ\beta].\]  

In case the link is evident, but there are different manifolds,
we will denote $\Gamma$ more precisely by $\Gamma_Q$.

Fenn and Rourke show in \cite{FenRou} Proposition 3.1, p. 359, that $\Gamma$ is indeed a rack,
and go on to show that $\partial:\Gamma\to\pi_1(Q_0,q_0)$ is an augmented rack. Furthermore, 
adding in part of the exact homotopy sequence
\[\pi_2(Q)\to \pi_2(Q,Q_0)\to\pi_1(Q_0)\to\pi_1(Q)\]
they show in Proposition 3.2, p. 360, that the associated crossed module of groups 
(using Proposition \ref{augm_racks_to_cr_mod_grps}) of the augmented rack 
$\partial:\Gamma\to\pi_1(Q_0,q_0)$ is Whitehead's crossed module of groups
\[\pi_2(Q,Q_0)\to\pi_1(Q_0).\]

We will now extend this theory to coverings on the topological side and crossed modules of 
augmented racks on the algebraic side.
 
Consider a covering space $\pi:P\to Q$, and a link $L\subset Q$.
Let us furthermore consider a link $L:M\subset Q$ (which we also denote by $L_Q$) and its inverse 
image $L_P:=\pi^{-1}(M)\subset P$. We will suppose that the link $L_p$ is also framed, and this is in
a manner which is compatible with the framing of $L_Q$. 

We have therefore two augmented racks 
\[\Gamma_P\to\pi_1(P_0,p_0)\,\,\,\,\,{\rm and}\,\,\,\,\,\, \Gamma_Q\to\pi_1(Q_0,\pi(p_0)).\]
From now on, we will suppress the base points in the notation.

\begin{theo}   \label{covering_to_cr_mod_racks}
There is an action of $\pi_1(Q_0)$ on $\Gamma_P$ such that the conditions of Proposition 
\ref{two_augmented_racks} are satisfied, i.e. the induced map $\pi:\Gamma_P\to\Gamma_Q$ is a crossed 
module of (augmented) racks.
\end{theo}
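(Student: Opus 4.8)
The plan is to verify the three conditions of Proposition \ref{two_augmented_racks} for the map $\pi:\Gamma_P\to\Gamma_Q$, viewed as $\alpha$, with $\beta=\pi_*:\pi_1(P_0)\to\pi_1(Q_0)$. First I would construct the required right action $\circ$ of $\pi_1(Q_0)$ on $\Gamma_P$. The natural definition mimics the action of $\pi_1(P_0)$ on $\Gamma_P$ (concatenation of a path with a loop at the basepoint), but now we must act by loops downstairs in $Q_0$: given a class $a\in\Gamma_P$ represented by a path $\alpha$ in $P_0$ from $M_P^+$ to $p_0$, and a loop $\gamma$ in $Q_0$ at $q_0=\pi(p_0)$ representing $g\in\pi_1(Q_0)$, I would first lift $\gamma$ to a (generally non-closed) path $\tilde\gamma$ in $P_0$ starting at $p_0$ — using the covering-space lifting property — and set $a\circ g$ to be the class of $\alpha$ followed by $\tilde\gamma$. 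The subtlety is that $\tilde\gamma$ ends at some other point in $\pi^{-1}(q_0)$, so one has to check this still represents an element of $\Gamma_P$ (the final point is allowed to vary over $\pi^{-1}(q_0)$? — here I would instead arrange that we only consider paths ending in the fibre over $q_0$, or more carefully, restrict to $g$ in the image of $\pi_*$ when closedness is needed; most cleanly, one checks well-definedness on homotopy classes directly, since homotopies of $\gamma$ rel endpoints lift to homotopies of $\tilde\gamma$ rel endpoints).

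Once $\circ$ is in place, condition (1), namely $x*g=x\circ\pi_*(g)$ for $g\in\pi_1(P_0)$, is essentially immediate: a loop $\gamma$ in $P_0$ at $p_0$ is its own lift, so concatenating with it upstairs agrees with lifting $\pi\circ\gamma$ and concatenating. Condition (2), the equivariance $\pi(x\circ g)=\pi(x)\cdot g$, follows by pushing the defining picture down through $\pi$: the path $\alpha$ followed by $\tilde\gamma$ projects to (the projection of $\alpha$) followed by $\gamma$, which by definition of the $\pi_1(Q_0)$-action on $\Gamma_Q$ is exactly $\pi(x)\cdot g$. Condition (3), $(y*p_1(x))\circ g=(y\circ g)*p_1(x\circ g)$, is the analogue of the statement that the rack-action is compatible with the group-action, and I would derive it from the augmentation identity for $\Gamma_P$ together with naturality of $\partial$: expanding $y*p_1(x)=y\cdot\partial(x)$ upstairs, applying $\circ g$, and using that $\circ g$ conjugates $\partial(x)$ to $\partial(x\circ g)$ in $\pi_1(P_0)$ — this last fact being the meridian $m_{\beta(0)}$ transported along the lifted path, which is precisely what $\partial$ of the transported class computes.

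The genuine obstacle, and the step I would spend the most care on, is the construction and well-definedness of the action $\circ$ together with the framing-compatibility hypothesis: one needs that lifting a loop $\gamma\subset Q_0$ to $P_0$ is unambiguous on homotopy classes and interacts correctly with the tubular neighbourhoods, so that $\pi^{-1}(N(L_Q))=N(L_P)$ and meridians lift to meridians (this is where the compatibility of the framings of $L_P$ and $L_Q$ is used). With that geometric bookkeeping done, conditions (1)--(3) reduce to the formal manipulations above, and then Proposition \ref{two_augmented_racks} applies verbatim to conclude that $\pi:\Gamma_P\to\Gamma_Q$ is a crossed module of racks. I would also remark that the rack structures so obtained on $\Gamma_P$ and $\Gamma_Q$ agree with the fundamental-rack structures defined via $\partial$, since the formula $x\lhd y=x*p_1(y)=x\cdot\partial(y)$ is exactly Fenn--Rourke's definition, so the crossed module is genuinely built from the fundamental racks of $L_P$ and $L_Q$.
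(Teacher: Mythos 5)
There is a genuine gap in your construction of the action $\circ$ of $\pi_1(Q_0)$ on $\Gamma_P$, and it is precisely the point where the paper does something different. You propose to lift a loop $\gamma$ at $q_0$ to a path $\tilde\gamma$ in $P_0$ \emph{starting} at $p_0$ and then to concatenate $\alpha$ with $\tilde\gamma$. But $\tilde\gamma$ generally ends at a different point of the fibre $\pi^{-1}(q_0)$, so the concatenation is a path from $M_P^+$ to the wrong endpoint and is \emph{not} an element of $\Gamma_P$: by definition $\Gamma_P$ consists of homotopy classes of paths ending at the \emph{fixed} basepoint $p_0$ (only the initial point is allowed to wander on $M^+$). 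You flag this yourself, but none of your proposed remedies works: letting the endpoint vary over the fibre changes the rack; restricting to $g\in\mathrm{im}\,\pi_*$ only recovers the action of $\pi_1(P_0)$ and defeats the purpose of the theorem; and ``checking well-definedness on homotopy classes'' does not address the problem, which is set-theoretic rather than one of well-definedness. The paper's fix is to lift $\gamma$ to the unique path $\tilde\gamma$ that \emph{ends} at $p_0$, and then to replace $\xi$ by the lift $\widetilde{\pi(\xi)}$ of its projection chosen so that $\widetilde{\pi(\xi)}(1)=\tilde\gamma(0)$; the concatenation $\widetilde{\pi(\xi)}$ followed by $\tilde\gamma$ then starts on $M_P^+=\pi^{-1}(M^+)$ and ends at $p_0$, hence genuinely lies in $\Gamma_P$. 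This re-lifting of $\xi$ is the missing idea.

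The rest of your outline is essentially sound once the action is correctly defined. Your verifications of conditions (1) and (2) match the paper's (for $g$ in the image of $\pi_*$ the lift of $\gamma$ ending at $p_0$ is a closed loop and the lift of $\pi(\xi)$ is $\xi$ itself, giving (1); projecting the whole picture down gives (2)). For condition (3), your idea that the transported class has $\partial$ equal to the meridian loop conjugated along the lifted path is the right mechanism; the paper phrases it as the cancellation of the supplementary round trip along $\tilde\gamma$ in the composition, which is the same computation. But all three verifications depend on the corrected definition of $\circ$, so as written the proposal does not establish the theorem.
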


\begin{proof} The action is given by the following procedure:

An element $c$ of $\pi_1(Q_0)$ is represented by 
a based loop $\gamma$ in $Q_0$. It lifts to a unique path $\tilde{\gamma}$ in $P_0$ which ends at
the base point $p_0\in P_0$. Now take an element $x$ in $\Gamma_P$, represented by a path $\xi$.
This path $\xi$ projects to a path $\pi(\xi)$ in $Q_0$ which can be lifted to $\widetilde{\pi(\xi)}$ 
such that $\widetilde{\pi(\xi)}(1)=\tilde{\gamma}(0)$ meaning that they are composable. 
The outcome is that $\xi$ (or more precisely some lift of $\pi(\xi)$) can be composed with 
$\tilde{\Gamma}$, and the composition is then, by definition, the action of the homotopy class 
$c=[\gamma]$ on $x=[\xi]$.   

Observe that the map $\beta:\pi_1(P_0)\to\pi_1(Q_0)$ is induced by $\pi:P\to Q$ and is injective. We
identify via $\beta$ the group $\pi_1(P_0)$ as a subgroup of $\pi_1(Q_0)$, the subgroup of loops
in $Q_0$ which lift to loops in $P_0$. 

We therefore clearly have Property {\it 1}, because when lifting the element $\beta(g)$ to a loop in $P_0$,
the action on $x$ becomes $\xi\circ\beta(g)$, which is the action 
in the augmented crossed module $\Gamma_P\to\pi_1(P_0)$.   

The map $\alpha$ (also induced by $\pi:P\to Q$) is clearly equivariant, because $\pi$ distributes on 
the factors of the composition of paths. 

Finally Property {\it 3} is illustrated by the following picture: 

\vspace{.5cm}
\begin{center}
\begin{tikzpicture}[baseline=(current bounding box.center)]
\useasboundingbox (-0.5,-0.5) rectangle (5.5,3.5);
\draw[] (0.00,2.00) -- (0.05,2.00) -- (0.10,2.00) -- (0.15,2.00) -- (0.19,2.00) -- (0.24,2.01) -- (0.29,2.01) -- (0.34,2.01) -- (0.39,2.01) -- (0.44,2.01) -- (0.49,2.01) -- (0.54,2.01) -- (0.59,2.01) -- (0.64,2.01) -- (0.69,2.01) -- (0.74,2.01) -- (0.79,2.01) -- (0.84,2.01) -- (0.89,2.00) -- (0.95,2.00) -- (1.00,2.00);
\draw[] (1.00,2.00) -- (1.11,1.99) -- (1.23,1.99) -- (1.35,1.98) -- (1.46,1.97) -- (1.58,1.95) -- (1.70,1.93) -- (1.81,1.91) -- (1.93,1.89) -- (2.04,1.86) -- (2.15,1.83) -- (2.25,1.79) -- (2.35,1.75) -- (2.45,1.70) -- (2.54,1.65) -- (2.63,1.59) -- (2.71,1.52) -- (2.78,1.45) -- (2.84,1.37) -- (2.90,1.29) -- (2.95,1.19);
\draw[] (2.95,1.19) -- (2.95,1.18) -- (2.96,1.17) -- (2.96,1.17) -- (2.97,1.16) -- (2.97,1.15) -- (2.97,1.14) -- (2.98,1.13) -- (2.98,1.12) -- (2.98,1.11) -- (2.99,1.10) -- (2.99,1.09) -- (2.99,1.08) -- (2.99,1.07) -- (2.99,1.06) -- (3.00,1.05) -- (3.00,1.04) -- (3.00,1.03) -- (3.00,1.02) -- (3.00,1.01) -- (3.00,1.00);
\draw[] (3.00,1.00) -- (3.00,0.99) -- (3.00,0.98) -- (3.00,0.97) -- (3.00,0.96) -- (3.00,0.95) -- (2.99,0.94) -- (2.99,0.93) -- (2.99,0.92) -- (2.99,0.91) -- (2.99,0.90) -- (2.98,0.89) -- (2.98,0.88) -- (2.98,0.87) -- (2.97,0.86) -- (2.97,0.85) -- (2.97,0.84) -- (2.96,0.83) -- (2.96,0.82) -- (2.95,0.82) -- (2.95,0.81);
\draw[] (2.95,0.81) -- (2.90,0.71) -- (2.85,0.63) -- (2.78,0.55) -- (2.71,0.48) -- (2.64,0.42) -- (2.56,0.36) -- (2.47,0.31) -- (2.38,0.27) -- (2.28,0.23) -- (2.18,0.19) -- (2.07,0.16) -- (1.96,0.13) -- (1.85,0.11) -- (1.73,0.09) -- (1.61,0.07) -- (1.49,0.05) -- (1.37,0.04) -- (1.25,0.02) -- (1.12,0.01) -- (1.00,0.00);
\draw[] (3.00,3.00) -- (3.06,2.97) -- (3.13,2.94) -- (3.19,2.91) -- (3.26,2.88) -- (3.32,2.85) -- (3.38,2.81) -- (3.44,2.78) -- (3.50,2.74) -- (3.56,2.71) -- (3.61,2.67) -- (3.66,2.63) -- (3.71,2.59) -- (3.76,2.55) -- (3.80,2.51) -- (3.84,2.46) -- (3.87,2.41) -- (3.90,2.36) -- (3.93,2.31) -- (3.96,2.26) -- (3.97,2.20);
\draw[] (3.97,2.20) -- (3.98,2.19) -- (3.98,2.18) -- (3.98,2.17) -- (3.98,2.16) -- (3.99,2.15) -- (3.99,2.14) -- (3.99,2.13) -- (3.99,2.12) -- (3.99,2.11) -- (3.99,2.10) -- (3.99,2.09) -- (4.00,2.08) -- (4.00,2.07) -- (4.00,2.06) -- (4.00,2.05) -- (4.00,2.04) -- (4.00,2.03) -- (4.00,2.02) -- (4.00,2.01) -- (4.00,2.00);
\draw[] (4.00,2.00) -- (4.00,1.99) -- (4.00,1.98) -- (4.00,1.97) -- (4.00,1.96) -- (4.00,1.95) -- (4.00,1.94) -- (4.00,1.93) -- (4.00,1.92) -- (3.99,1.91) -- (3.99,1.90) -- (3.99,1.89) -- (3.99,1.88) -- (3.99,1.87) -- (3.99,1.86) -- (3.99,1.85) -- (3.98,1.84) -- (3.98,1.83) -- (3.98,1.82) -- (3.98,1.81) -- (3.97,1.80);
\draw[] (3.97,1.80) -- (3.96,1.74) -- (3.93,1.69) -- (3.90,1.64) -- (3.87,1.59) -- (3.84,1.54) -- (3.80,1.49) -- (3.76,1.45) -- (3.71,1.41) -- (3.66,1.37) -- (3.61,1.33) -- (3.56,1.29) -- (3.50,1.26) -- (3.44,1.22) -- (3.38,1.19) -- (3.32,1.15) -- (3.26,1.12) -- (3.19,1.09) -- (3.13,1.06) -- (3.06,1.03) -- (3.00,1.00);
\filldraw[fill=white] (0.00,2.00) ellipse (0.14cm and 0.14cm);
\draw (2.50,3.00) node{$\tilde{\gamma}(1)$};
\draw (1.50,2.00) node{$\hspace{2.5cm}\widetilde{p_1(\xi)}$};
\draw (5.50,2.00) node{$\hspace{-2cm}\tilde{\gamma}$};
\draw (3.50,1.00) node{$\hspace{1.4cm}\tilde{\gamma}(0)=\tilde{\eta}(1)$};
\draw (0.50,0.00) node{$\tilde{\eta}(0)$};
\draw (2.50,0.00) node{$\tilde{\eta}$};
\end{tikzpicture}

\end{center}
\vspace{.5cm}

There are two elements $x,y$ of 
$\Gamma_P$ and one element $g$ of $\pi_1(Q_0)$ in play here. Therefore we have two paths $\xi,\eta$
upstairs, and one loop $\gamma$ downstairs. The paths $\xi$ and $\eta$ are pushed down using $\pi$ and 
then lifted to $\tilde{\xi}$ and $\tilde{\eta}$.   

On the LHS of the equation, which is Property {\it 3}, there is an action of $p_1(x)$ applied to the 
element $y$, 
meaning the two paths rejoin each other at the point $p:=\tilde{\gamma}(0)\in P$ in the above
picture, where one of them is carrying a loop at its left end (which is the $\partial=p_1$ map!). 
Moreover, the loop $\gamma$ is lifted to some path $\tilde{\gamma}$ from $\tilde{\gamma}(0)$ 
to the base point $p_0=\tilde{\gamma}(1)$.

On the RHS of the equation, translating into paths in $P_0$, we have two paths from somewhere to $p$ 
and then to $p_0$,
which illustrate the action of $\gamma$ on $x$ and $y$. But then one takes $p_1=\partial$ of the path 
corresponding 
to the action on $x$, which means this path gets a little loop on its left end. Then compose the two 
paths. 

One sees that both sides are equal because on the right hand side, the supplementary round trip along the
path corresponding to the lift of $\gamma$ cancels in the composition.\end{proof}

Observe that it follows from the constructions in this section
that in the above situation, $\pi:P\to Q$ induces simultaneously a map of 
crossed modules of groupes 
\[\xymatrix{ \pi_2(P,P_0)  \ar[r] \ar[d]^{\alpha} & \pi_1(P_0) \ar[d]^{\beta}  \\
             \pi_2(Q,Q_0) \ar[r] & \pi_1(Q_0)  }\]
between the associated crossed modules introduced by Whitehead and a crossed module
$\pi_2(P,P_0)\to\pi_2(Q,Q_0)$ as associated to the crossed module of racks $\Gamma_P\to\Gamma_Q$. 
This follows immediately from the fact that the functor 
${\rm As}:{\tt Racks}\to{\tt Groups}$ sends the fundamental racks associated 
to the links to the corresponding homotopy groups, see \cite{FenRou} 
Proposition 3.2, p. 360.

\end{document}